\newcommand{\arr}{{\; \rightarrow \;}}
\newcommand{\surjects}{{\; \twoheadrightarrow \;}}
\newcommand{\injects}{{\; \hookrightarrow \;}}
\def\BB{\mathbb{B}}
\def\FF{\mathbb{F}}
\def\HH{\mathbb{H}}
\def\PP{\mathbb{P}}
\def\QQ{\mathbb{Q}}
\def\ZZ{\mathbb{Z}}
\newcommand{\gern}{{\frak{n}}}
\newcommand{\gerp}{{\frak{p}}}
\newcommand{\gerq}{{\frak{q}}}
\newcommand{\gerJ}{{\frak{J}}}
\newcommand{\uA}{{\underline{A}}}
\newcommand{\uB}{{\underline{B}}}
\newcommand{\calF}{{\mathcal{F}}}
\newcommand{\calL}{{\mathcal{L}}}
\newcommand{\calM}{{\mathcal{M}}}
\newcommand{\calO}{{\mathcal{O}}}
\newcommand{\calP}{{\mathcal{P}}}
\newcommand{\ra}{\rightarrow}
\newcommand{\lra}{\longrightarrow}
\newlength{\ownl}
\newcommand{\End}{{\operatorname{End}\,}}
\newcommand{\Gal}{{\operatorname{Gal}\,}}
\newcommand{\Hom}{{\operatorname{Hom}\,}}
\newcommand{\pr}{{\operatorname{pr}\,}}
\newcommand{\Spec}{{\operatorname{Spec}\,}}
\newcommand{\GL}{\operatorname{GL}}
\newcommand{\F}{{\mathbb{F}}}
\newcommand{\Q}{{\mathbb{Q}}}
\newcommand{\gn}{{\mathfrak{n}}}
\newcommand{\Qbar}{{\overline{\Q}}}
\DeclareMathOperator{\lcm}{lcm}
\newcommand{\Qpbar}{\overline{\Q}_p}
\newcommand{\Fpbar}{\overline{\F}_p}
\newcommand{\Fp}{{\F_p}}
\newcommand{\Xbar}{\overline{X}}
\newcommand{\Ybar}{\overline{Y}}
\newcommand{\OF}{\mathcal{O}_F}
\newtheorem*{thmn}{Theorem} 
\newtheorem*{corn}{Corollary} 
\newtheorem{thm}{Theorem}[section]%[subsection]
\newtheorem{corollary}[thm]{Corollary}
\newtheorem{cor}[thm]{Corollary}
 \newtheorem{lemma}[thm]{Lemma}
\newtheorem{lem}[thm]{Lemma} 
\newtheorem{prop}[thm]{Proposition}
 \theoremstyle{definition}
 \theoremstyle{definition}
\newtheorem{defn}[thm]{Definition} \theoremstyle{remark}
\newtheorem{rem}[thm]{Remark}
\numberwithin{equation}{section}
\theoremstyle{definition}
\begin{document}

\title[Weights of Hilbert modular forms]{The cone of minimal weights for mod $p$ Hilbert modular forms}

\author{\sc Fred Diamond}
\email{fred.diamond@kcl.ac.uk}
\address{Department of Mathematics,
King's College London, WC2R 2LS, UK}

\author{\sc Payman L Kassaei}
\email{payman.kassaei@kcl.ac.uk}
\address{Department of Mathematics,
King's College London, WC2R 2LS, UK}

\thanks{F.D.~was partially supported by EPSRC Grant EP/L025302/1.}
\subjclass[2010]{11F41 (primary), 11F33, 14G35  (secondary).}
%\keywords{\dots}
%\date{Feb. 2020}
\begin{abstract}  
We prove that all mod $p$ Hilbert modular forms arise via multiplication by generalized partial Hasse invariants from
 forms whose weight falls within a certain {\it minimal cone}.  This answers a question posed by Andreatta and Goren,
and generalizes our previous results which treated the case where $p$ is unramified in the totally real field.  Whereas our
previous work made use of deep Jacquet--Langlands type results on the Goren--Oort stratification (not yet available when $p$ is ramified), here we instead use
properties of the stratification at Iwahori level which are more readily generalizable to other Shimura varieties. \end{abstract}

\maketitle

%\section{Hilbert Modular Varieties And Modular Forms}\label{section: HMV}

%\subsection{Introduction}

\section{Introduction}   
In this paper, we consider weights of mod $p$ Hilbert modular forms associated to $F$, where 
$F$ is a totally real field of degree $d = [F:\QQ]>1$.    The weights of such forms are $d$-tuples of integers,
and unlike the case of characteristic zero Hilbert modular forms or mod-$p$ classical modular forms,
there are non-zero forms of partially negative weight.  Indeed the partial Hasse invariants studied
by Andreatta and Goren in~\cite{AG} are examples of such forms, and they ask whether all non-zero
forms arise from ones of non-negative weight by multiplication by partial Hasse invariants.
In our earlier work~\cite{DK}, we answered the question in the affirmative, under the assumption that $p$ is
unramified in $F$.  Indeed we proved a stronger result, namely that all non-zero
forms arise from ones whose weight lies in a certain minimal cone contained in the set of
non-negative weights (and is strictly smaller unless $p$ splits completely in $F$).

Here we obtain such a result for arbitrary totally real fields, now allowing $p$ to be ramified in $F$.  
The overall strategy is similar to the one used in \cite{DK}, namely to use $\PP^1$-bundle fibrations
on strata to prove vanishing results, and deduce from these that multiplication by certain partial
Hasse invariants defines an isomorphism for weights outside the minimal cone.
There is however an essential difference which makes the method in this paper more amenable to
generalization: rather than obtaining the desired fibrations from geometric Jacquet--Langlands relations
between Shimura varieties for different reductive groups, we derive the
fibrations from the Hecke action at primes over $p$.

\subsection*{Motivation}  Before describing the main results in more detail, we explain the primary motivation, which stems from the
analogue of Serre's Conjecture in this context.
Recall that Serre's Conjecture~\cite{Serre}, now a theorem of Khare and Wintenberger~\cite{KW1, KW2},
states that every odd, continuous, irreducible representation $\rho:\Gal(\Qbar/\Q) \to \GL_2(\Fpbar)$
arises from a mod $p$ modular form (of level prime to $p$); moreover the minimal level of such a
form is the Artin conductor of $\rho$, and the minimal weight is determined by the restriction of
$\rho$ to $\Gal(\Qbar_p/\Q_p)$.  Analogues of Serre's Conjecture for Galois representations attached
to Hilbert modular forms and more general reductive groups have typically recast the questions in terms
of algebraically-defined Serre weights and the cohomology of arithmetic groups or (usually zero-dimensional)
Shimura varieties (see for example \cite{Ash, BDJ, GHS}).  Such formulations miss potential
geometric input, such as the notion of a minimal weight in the setting of mod $p$ Hilbert modular forms.
This issue is addressed in \cite{DS} (assuming $p$ is unramified in $F$, and in a forthcoming sequel in the ramified case),
where a version of Serre's Conjecture for representations $\rho:\Gal(\overline{F}/F) \to \GL_2(\Fpbar)$ is formulated using
the geometric notion of mod $p$ Hilbert modular forms; in particular the conjecture characterizes a minimal
weight in terms of the restrictions of $\rho$ to $\Gal(\overline{F}_v/F_v)$ for $v|p$.  In addition to predicting
the non-algebraic weights of forms giving rise to $\rho$, the conjectural minimal weight for $\rho$
(and its twists) determines its algebraic Serre weights.  Furthermore considerations from $p$-adic Hodge
theory, modular representation theory and properties of partial $\Theta$-operators led to the prediction
that these minimal weights always lie in a certain non-negative cone, which we call the {\em minimal cone}.

In the classical setting, the non-negativity of minimal weights is immediate from the Riemann--Roch Theorem,
which implies the vanishing of spaces of forms of negative weight.  In the Hilbert modular setting, partial Hasse
invariants associated to non-split primes have partially negative weight, so such a strong vanishing result will
not hold.  On the other hand, the geometric version of Serre's Conjecture in this context predicts that all mod $p$
Hilbert modular forms arise by multiplication by partial Hasse invariants from ones whose weight is not only
non-negative, but lies in the minimal cone, and this is precisely what we prove.  We remark that the geometric
version of Serre's Conjecture further predicts {\em positivity} of minimal weights associated to {\em irreducible}
representations $\rho$; this question is addressed in \cite{DDW} by combining the results of this paper with an argument using partial Frobenius operators (see also \cite{D}).

 Finally, we mention another application of our
results in connection with associated Galois representations:  they are used in the proof of the main theorem
of \cite{DDW}, which establishes unramifiedness at $p$ of Galois representations associated to
certain torsion Hilbert modular forms of parallel weight one, and its generalization in \cite{DM} to certain
cases of partial weight one.

\subsection*{Main results} We now briefly explain the statements of our main results, referring the reader to the body of the paper
for more detailed definitions. 
We fix a sufficiently small level $\gern$ prime to $p$,
and let $\overline{X}^{\rm PR}$ denote the geometric special fibre of the Pappas--Rapoport model (defined in \cite{PR}) 
of the associated Hilbert modular variety (recall $d>1$).
Then $\overline{X}^{\rm PR}_{\Fpbar}$ is smooth of dimension $d$ over $\overline{\FF}_p$, and comes equipped with automorphic line bundles
$\omega_\tau$ indexed by the embeddings $\tau \in \Sigma = \Hom(F, \overline{\QQ}_p)$.  
For ${\bf k} = \sum k_\tau {\bf e}_\tau \in \ZZ^\Sigma$, we consider the space of mod $p$ Hilbert
modular forms of weight $\bf k$, defined as
$$M_{\bf k}(\gern,\overline{\FF}_p) = H^0(\overline{X}^{\rm PR}_{\Fpbar}, \bigotimes_\tau \omega_{\tau}^{\otimes k_\tau}).$$
For each $\tau \in \Sigma$ there is a generalized partial Hasse invariant
$H_\tau$ (defined by Reduzzi and Xiao in \cite{RX} if $p$ is ramified in $F$) of weight
${\bf h}_\tau = n_\tau  {\bf e}_{\sigma^{-1}\tau} - {\bf e}_\tau$,
where the integer $n_\tau \in \{1,p\}$ and $\sigma$ is a certain ``shift'' permutation on $\Sigma$
(generalizing the Frobenius if $p$ is unramified in $F$, in which case $n_\tau = p$ for all $\Sigma$).

Our key theorem then states:
\begin{thmn} If $n_{\tau}k_{\tau} < k_{\sigma^{-1}{\tau}}$ for some for some $\tau\in \Sigma$, then multiplication by $H_{\tau}$
induces an isomorphism:
$$M_{{\bf k}-{\bf h}_{\tau}}(\gn;\Fpbar) \,\,\,\stackrel{\sim}{\lra} \,\,\, M_{\bf k}(\gn; \Fpbar).$$
\end{thmn}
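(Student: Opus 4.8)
The plan is to interpret multiplication by $H_\tau$ as the inclusion of the subspace of forms that vanish along the zero divisor of $H_\tau$, and then to prove the complementary vanishing by means of a $\PP^1$-fibration arising, as advertised in the introduction, from the Hecke action at the prime over $p$ attached to $\tau$. Injectivity is immediate and can be dispatched first: since $\overline{X}^{\rm PR}_{\Fpbar}$ is smooth, hence reduced, and $H_\tau$ is a nonzero section of the weight-$\mathbf{h}_\tau$ bundle that is invertible on a dense open subset (the complement of its zero locus meets every component), it is a non-zero-divisor, so multiplication by $H_\tau$ on global sections of any line bundle is injective. Thus the real content is surjectivity.

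Write $Z_\tau=\operatorname{div}(H_\tau)$, so that $\mathcal{O}_{\overline{X}}(Z_\tau)$ is the automorphic bundle of weight $\mathbf{h}_\tau$ and $H_\tau$ is its tautological section. Setting $\mathcal{L}=\bigotimes_{\tau'}\omega_{\tau'}^{\otimes k_{\tau'}}$, I would tensor the structure sequence $0\to\mathcal{O}(-Z_\tau)\xrightarrow{H_\tau}\mathcal{O}\to\mathcal{O}_{Z_\tau}\to 0$ with $\mathcal{L}$ and take cohomology. Since $\mathcal{L}\otimes\mathcal{O}(-Z_\tau)$ has weight $\mathbf{k}-\mathbf{h}_\tau$, the resulting exact sequence
\[0\to M_{\mathbf{k}-\mathbf{h}_\tau}(\gn;\Fpbar)\xrightarrow{\cdot H_\tau} M_{\mathbf{k}}(\gn;\Fpbar)\to H^0(Z_\tau,\mathcal{L}|_{Z_\tau})\]
identifies the cokernel of multiplication with (a subspace of) $H^0(Z_\tau,\mathcal{L}|_{Z_\tau})$. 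Hence it suffices to prove the vanishing $H^0(Z_\tau,\mathcal{L}|_{Z_\tau})=0$ under the hypothesis $n_\tau k_\tau<k_{\sigma^{-1}\tau}$.

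The crux is this vanishing. Using the analysis of the Pappas--Rapoport special fibre at Iwahori level, I would exhibit the reduced Goren--Oort-type stratum underlying $Z_\tau$ as carrying a $\PP^1$-bundle structure $\pi\colon Z_\tau\to T_\tau$, whose fibres are the orbits of the Hecke correspondence at the prime of $F$ over $p$ singled out by $\tau$. On such a fibre $\PP^1$ the bundles $\omega_{\tau'}$ with $\tau'\neq\tau,\sigma^{-1}\tau$ restrict trivially, while $\omega_\tau$ and $\omega_{\sigma^{-1}\tau}$ restrict with degrees of opposite sign matching the weight $\mathbf{h}_\tau=n_\tau\mathbf{e}_{\sigma^{-1}\tau}-\mathbf{e}_\tau$ of $H_\tau$; concretely, $\mathcal{L}|_{\PP^1}\cong\mathcal{O}_{\PP^1}(e)$ with $e$ equal, up to a positive factor, to $n_\tau k_\tau-k_{\sigma^{-1}\tau}$. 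This degree is negative exactly when $n_\tau k_\tau<k_{\sigma^{-1}\tau}$, so under the hypothesis the fibrewise $H^0$ vanishes, giving $\pi_*(\mathcal{L}|_{Z_\tau})=0$ and therefore $H^0(Z_\tau,\mathcal{L}|_{Z_\tau})=0$, which completes the argument.

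The main obstacle is the geometric input of the third step: constructing the $\PP^1$-fibration on $Z_\tau$ in the ramified case and pinning down the fibrewise degrees of $\omega_\tau$ and $\omega_{\sigma^{-1}\tau}$. This is precisely where \cite{DK} invoked geometric Jacquet--Langlands relations between Shimura varieties; here it must instead be extracted from the Hecke action at $p$ together with the structure of the Pappas--Rapoport model. Two points require particular care: the multiplicity of $H_\tau$ along $Z_\tau$ (reflected in $n_\tau\in\{1,p\}$), which may force $\mathcal{O}_{Z_\tau}$ to be non-reduced and hence a filtration of $\mathcal{O}_{Z_\tau}$ by conormal twists, reducing the vanishing to the reduced stratum; and the behaviour of the fibration over the deeper (non-generic) strata, so that the $\PP^1$-bundle description and the degree computation remain valid there.
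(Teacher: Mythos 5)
Your reduction to the vanishing of $H^0(Z_\tau,\omega^{\mathbf k}|_{Z_\tau})$ is exactly the paper's first step, and the idea of killing that group by a negative-degree computation on the fibres of a $\PP^1$-bundle is also the right one. But the geometric input you propose is not the one the paper uses, and as stated it is the one that is \emph{unavailable}: you ask for a $\PP^1$-fibration $\pi\colon Z_\tau\to T_\tau$ on the stratum $Z_\tau$ itself (so that $\pi_*(\mathcal L|_{Z_\tau})=0$). That is the geometric Jacquet--Langlands picture of \cite{TX} used in \cite{DK}, which does not yet exist when $p$ ramifies. What the paper actually does is different in an essential way: working on the Iwahori-level model $\Ybar^{\rm PR}$, it takes the stratum $Z_{\{\sigma\tau_0\},\{\tau_0\}^c}$, which via $\pi_1$ is (up to an $r$-Frobenius factor $\lambda$) the $\PP^1$-bundle $\PP=\PP(\HH_{\tau_0})$ over the \emph{other} divisor $Z_{\sigma\tau_0}$, while via $\pi_2$ it surjects onto $Z_{\tau_0}$. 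One then pulls sections back along the surjection $\pi_2\lambda'\colon\PP\to Z_{\tau_0}$ (injective on $H^0$ because $Z_{\tau_0}$ is reduced), and computes that $(\pi_2\lambda')^*\omega^{\mathbf k}$ has degree $p^r k_{\tau_0}-(p^r/n_{\tau_0})k_{\sigma^{-1}\tau_0}<0$ on each fibre of $\mathrm{pr}\colon\PP\to Z_{\sigma\tau_0}$. So the $\PP^1$'s live over $Z_{\sigma\tau}$, not over a base of $Z_\tau$, and the vanishing is obtained by an injection into $H^0(\PP,\cdot)$ rather than by a projection formula on $Z_\tau$. Without this reorientation your third step has no construction behind it. (Also, your worry about $\mathcal O_{Z_\tau}$ being non-reduced is unfounded: by \cite[Thm 3.10]{RX} the $Z_\tau$ are smooth divisors, and $n_\tau$ is a coefficient in the weight $\mathbf h_\tau$, not a multiplicity of $H_\tau$ along its zero locus.)

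The second, independent gap is the case $\Sigma_\gerp=\{\tau_0\}$, i.e.\ $e_\gerp=f_\gerp=1$. There $\sigma^{-1}\tau_0=\tau_0$, $n_{\tau_0}=p$, the hypothesis becomes $k_{\tau_0}<0$, and the fibre-degree dichotomy between $\omega_{\tau_0}$ and $\omega_{\sigma^{-1}\tau_0}$ collapses: no $\PP^1$-bundle argument of the above shape applies. The paper proves the stronger statement $M_{\mathbf k}(\gn;\Fpbar)=0$ in this case by an entirely separate induction over the Goren--Oort strata $Z_{T_i}$, using that the open strata are quasi-affine (via a torsion ample line bundle), that every component of $Z_T$ meets every $Z_\tau$ with $\tau\notin T$, and a degree computation on the resulting proper curves, together with the ``maximal power of $H_\tau$ dividing $f$'' lemma to conclude vanishing from an infinite divisibility. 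Your proposal does not address this case at all, and your mechanism cannot be repaired to cover it; you would need to supply this second argument (or something equivalent) to have a complete proof.
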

This generalizes the key result of \cite{DK} to the case where $p$ is ramified in $F$.  As mentioned above, the strategy
for proving it is similar; however rather than applying deep results\footnote{As yet unavailable when $p$ is ramified in $F$.}  of~\cite{TX} (refining the geometric Jacquet--Langlands
relation of~\cite{H2}), we give an argument which uses more straightforward properties (proved in \cite{ERX})
on the stratification of the special fibre of the Pappas--Rapoport model of the Hilbert modular variety of Iwahori level
at a prime over $p$ and its behaviour under the degeneracy maps to $\overline{X}^{\rm PR}$.  As in \cite{DK}, the case
where there are split primes over $p$ requires a separate argument, for which we need to establish certain properties of
the Goren--Oort stratification of the Pappas--Rapoport model; these results may be of independent interest.

The theorem allows us to answer the question posed by Andreatta and Goren in~\cite[15.8]{AG} in the ramified case as well
(or more precisely a version which also accounts for the generalized partial Hasse invariants of \cite{RX}).
For a non-zero form $f \in M_{\bf k}(\gn; \Fpbar)$, one can define its {\em filtration} $\Phi(f)$ to be the minimal
weight of a form from which it arises by multiplication by partial Hasse invariants.  
Defining the {\em minimal cone}
$C^{\mathrm{min}} \subset \QQ_{\ge 0}^\Sigma$ to be the set of $\sum x_\tau {\bf e}_\tau$ such that $n_\tau x_\tau \ge x_{\sigma^{-1}\tau}$ for all $\tau$; 
we obtain the corollary:
\begin{corn}  If $0 \neq f \in M_{\bf k}(\gern,\Fpbar)$, then $\Phi(f) \in C^{\mathrm{min}}$.
\end{corn}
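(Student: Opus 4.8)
The plan is to obtain the corollary as a formal consequence of the key Theorem, the only real work being to pin down that $\Phi(f)$ is well defined. First I would fix $0\neq f\in M_{\bf k}(\gern,\Fpbar)$ and let $g$ be a nonzero form of minimal weight ${\bf k}'=\Phi(f)$ from which $f$ arises, so that $f=\big(\prod_{\tau\in\Sigma}H_{\tau}^{a_{\tau}}\big)\,g$ with $a_{\tau}\in\ZZ_{\ge 0}$ and $g$ divisible by none of the $H_{\tau}$. To justify that such a $g$ exists---equivalently, that the process of dividing out partial Hasse invariants terminates---I would note that a nonzero section of a line bundle on the proper variety $\overline{X}^{\rm PR}_{\Fpbar}$ has finite order of vanishing along each of the nonempty divisors $V(H_{\tau})$; hence only finitely many monomials $\prod_{\tau}H_{\tau}^{a_{\tau}}$ can divide $f$, and, the $V(H_{\tau})$ being distinct prime divisors, the maximal division is unique and produces a well-defined ${\bf k}'$.

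The core step is to show that ${\bf k}'=\sum_{\tau}k'_{\tau}{\bf e}_{\tau}$ satisfies $n_{\tau}k'_{\tau}\ge k'_{\sigma^{-1}\tau}$ for every $\tau$. I would argue by contradiction: if $n_{\tau_0}k'_{\tau_0}<k'_{\sigma^{-1}\tau_0}$ for some $\tau_0$, then the hypothesis of the key Theorem holds for the weight ${\bf k}'$ at $\tau_0$, so multiplication by $H_{\tau_0}$ is an isomorphism $M_{{\bf k}'-{\bf h}_{\tau_0}}(\gern,\Fpbar)\iso M_{{\bf k}'}(\gern,\Fpbar)$. In particular this map is surjective, so $g=H_{\tau_0}\,g''$ for some $g''$, contradicting the fact that $g$ was chosen divisible by no $H_{\tau}$. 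Hence all the cone inequalities hold for ${\bf k}'$.

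It remains to promote these inequalities to membership in $C^{\mathrm{min}}\subset\QQ_{\ge 0}^{\Sigma}$, i.e.\ to deduce non-negativity. For this I would chain the inequalities around each $\sigma$-orbit $\tau_0,\tau_1=\sigma\tau_0,\dots,\tau_{\ell}=\tau_0$: the inequality at $\tau_{i+1}$ reads $k'_{\tau_i}\le n_{\tau_{i+1}}k'_{\tau_{i+1}}$, and multiplying these around the cycle gives $k'_{\tau_0}\le\big(\prod_{i}n_{\tau_i}\big)k'_{\tau_0}$. Since each $\sigma$-orbit corresponds to a prime $\gp\mid p$ and contains at least one $\tau$ with $n_{\tau}=p$, the product $\prod_i n_{\tau_i}$ exceeds $1$, forcing $k'_{\tau_0}\ge 0$; applying this to each $\tau$ gives ${\bf k}'\in\QQ_{\ge 0}^{\Sigma}$, so ${\bf k}'=\Phi(f)\in C^{\mathrm{min}}$. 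The only genuine obstacle here is the bookkeeping of the first paragraph: one must know that the $V(H_{\tau})$ are distinct prime divisors, so that the order of vanishing of $f$ along each is independent and finite; this guarantees both termination of the descent and uniqueness of $\Phi(f)$, and it belongs to the basic properties of the $H_{\tau}$ established where they are defined. Granting it, every remaining step is immediate from the key Theorem.
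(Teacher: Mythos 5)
Your proposal is correct and follows essentially the same route as the paper: write $f=g\prod_\tau H_\tau^{a_\tau}$ with $g$ divisible by no $H_\tau$ (well-definedness coming from the finiteness of orders of vanishing and the fact that the divisors of distinct $H_\tau$ share no component), and derive a contradiction from the surjectivity in the key Theorem if some inequality $n_\tau w_\tau\ge w_{\sigma^{-1}\tau}$ fails. The only additions are your explicit verification of non-negativity by chaining the inequalities around a $\sigma$-orbit, which the paper dispatches with ``it is easy to see that $C^{\rm min}\subset C^{\rm st}$,'' and a slight overstatement in calling the $Z_\tau$ prime divisors (they need only have no common components), neither of which affects the argument.
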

As another immediate consequence, we obtain the vanishing
$M_{\bf k}(\gern,\Fpbar)$ for weights outside the Hasse cone, the cone spanned by the ${\bf h}_\tau$
(proved independently in \cite{GKo} in the unramified case).
%Finally, we also note the immediate consequence that  vanishes for
%weights outside the {\em Hasse cone} $C^{\mathrm Hasse}$ spanned by the ${\bf h}_\tau$:
%\begin{cor} If $M_{\bf k}(\gern,\FF) \neq 0$, then ${\bf k} \in C^{\mathrm Hasse}$.
%\end{cor}

\subsection*{Acknowledgements}  We are grateful to an anonymous referee for a careful reading of the paper,
and especially for comments leading to improvements of the exposition.

%\subsection{Notation}\label{subsection:notation}

\section{Notation}\label{subsection:notation} Let $F$ be a totally real field of degree $d$ over $\QQ$, and let $\OF$ denote its ring of integers. We assume throughout that $d > 1$. Let $p$ be a prime number,
and $\gern$ an ideal of $\OF$ relatively prime to $p$ which is contained in $N\OF$ for some $N > 3$. Let 
\[
\mathbb{S}:=\{\gerp \in \Spec \calO_F: \gerp|p\}.
\] 
For any $\gerp \in \mathbb{S}$, we let $e_\gerp$ denote the ramification index, and $f_\gerp$ denote the degree of the residue field  $\FF_\gerp:= {\calO_F}/\gerp$ over $\Fp$.  We will be considering a finite field $\FF$ whose degree over $\FF_p$ is divisible by  $f =\lcm\{f_\gerp: \gerp\vert p\}$.

 For every $\gerp\in\mathbb{S}$, we let $F_\gerp$ be the completion of $F$ at $\gerp$,  $\calO_{\gerp}$ its ring of integers, $F_\gerp^{ur}/\QQ_p$ its maximal uramified subextension, and $\calO_{\gerp}^{ur}$ the ring of integers in $F_\gerp^{ur}$. We identify, once and for all, $W(\FF_\gerp)$ with the ring of integers in the maximal unramified sub-extension of $F_\gerp /\QQ_p$, and  fix a  choice of a uniformizer $\varpi_\gerp \in F_\gerp$ for each $\gerp\in\mathbb{S}$. Let 
 \[
 \mathbb{B}:=\textstyle\bigsqcup_{\gerp\in\mathbb{S}}
\mathbb{B}_\mathfrak{p},
\] 
where 
$\mathbb{B}_\mathfrak{p}:={\rm Hom}(F_\gerp^{ur},\Qpbar) \cong {\rm Hom}(W(\FF_\gerp),W(\FF)) \cong {\rm Hom}(\FF_\gerp,\FF)$. Let 
\[
\Sigma:=\textstyle\bigsqcup_{\gerp\in\mathbb{S}}
\Sigma_\mathfrak{p},
\]
where  $\Sigma_\gerp:={\rm Hom}(F_\gerp,\Qpbar)$. Then $\Sigma$ can be identified with ${\rm Hom}(F,\Qpbar)$ via restriction to $F$. There is an obvious  restriction map $res:\Sigma_\gerp \ra \BB_\gerp$ which is $e_\gerp$-to-$1$. For $\beta \in \BB_\gerp$, we denote
\[
res^{-1}(\beta)=\{\tau_{\beta,1},\tau_{\beta,2},\cdots,\tau_{\beta,e_\gerp}\}.
\]
There is no canonical way to order these embeddings, and we choose an ordering as above once and for all. In short, every embedding $\tau:F \arr \Qpbar$ is the restriction to $F$ of a unique embedding of the form $\tau_{\beta,i}$ for some $\beta \in \BB$, and some $1 \leq i \leq e_\gerp$ (where $\gerp$ is such that $\beta \in \BB_\gerp$). We often abuse notation and identify $\tau$ with $\tau_{\beta,i}$. Let $\phi$ denote the automorphism of  $W(\FF)$ that lifts the Frobenius on $\FF$.  We then define $\sigma:\Sigma \to \Sigma$ by $\sigma\tau_{\beta,i} = \tau_{\beta,i+1}$ if $i < e_\gerp$, and $\sigma\tau_{\beta,e_\gerp} = \tau_{\phi\beta,1}$.
%we define $\sigma^{-1}\tau$ to be $\tau_{\beta, i-1}$ if $i>1$, and $\tau_{\sigma^{-1}\beta,e_\gerp}$ if $i=1$.

 Let $K\subset\Qpbar$ be a finite extension of $\QQ_p$ that contains all the images $\tau_{\beta,i}(F_\gerp)$ for all $\gerp \in \mathbb{S}$, $\beta \in \BB_\gerp$, and $1 \leq i \leq e_\gerp$. Let $\calO$ denote the ring of integers of $K$ (fixing an identification of $W(\FF)$ with a subring of $\calO$). We fix a uniformizer $\varpi$ of $\calO$. Note that we can take $\FF$ to be $\calO/\varpi$.

%We let $\sigma$ denote the Frobenius automorphism of $\FF$, as well as the automorphism of $W(\FF)$ lifting $x \mapsto
%x^p$ modulo $p$. It acts on $\mathbb{B}$ via $\beta \mapsto \sigma
%\circ \beta$; the induced action on each $\mathbb{B}_\mathfrak{p}$ is transitive.
%For $S \subseteq \mathbb{B}$, we let
%\[\ell(S)=\{\sigma^{-1}\circ\beta\colon \beta \in S\},
%\]
%\[
%r(S)=\{\sigma\circ\beta\colon \beta \in S\},
%\]
%\[S^c=\mathbb{B} - S.\] 

Let $R$ be an $\calO$-algebra, and $\Lambda$ be a $(\OF \otimes_\mathbb{Z}  R)$-module. The decomposition \[\mathcal{O}_F
\otimes_\mathbb{Z} \calO=\prod_{p|\gerp} \calO_{\gerp} \otimes_{\ZZ_p} \calO=\prod_{\beta \in \mathbb{B}}  \calO_\gerp \otimes_{W(\FF_\gerp),\beta} \calO
\]
induces a decomposition
\begin{eqnarray}\label{equation:decomposition}
\Lambda=\bigoplus_{\gerp \in {\mathbb{S}}} \Lambda_\gerp=\bigoplus_{\beta\in \mathbb{B}} \Lambda_\beta,
\end{eqnarray}
 of $\Lambda$ into $(\calO_F \otimes_\ZZ R)$-modules, where the action of $\calO_F$ on $\Lambda_\gerp$ extends to a continuous  action of $\calO_\gerp$, and  for $\beta \in \BB_\gerp$,  $\Lambda_\beta \subset \Lambda_\gerp$ is the submodule where the action of $W(\FF_\gerp) \subset \calO_\gerp$ is given by  scalar multiplication via $\beta: W(\FF_\gerp) \arr \calO \arr R$. 
 
%\subsection{Hilbert Modular Varieties: the Pappas-Rapoport model} \label{subsection: HMVtame} 

 \section{Hilbert Modular Varieties: the Pappas-Rapoport model} \label{subsection: HMVtame} In this section, we consider the Pappas-Rapoport integral model of the Shimura variety corresponding to the reductive group $G$ over $\QQ$ defined by  $$G(R)=\{g \in \GL_2(F \otimes_\QQ R)| \det(g) \in R^*\},$$ for a $\QQ$-algebra $R$,  where the level is hyperspecial at $p$. When $p$ is unramified in $F$, this model is the same as the Deligne-Pappas model. When $p$ is ramified in $F$, the Pappas-Rapoport model is  a desingularization of the Deligne-Pappas model.

Let $\gerJ$ be a fractional ideal of $F$, and $\gerJ^+$ its cone of totally positive elements. Let $\gern$ be an ideal of $\calO_F$ prime to $p$ which is contained in $N\calO_F$ for an integer $N>3$.  Let $X_\gerJ^{\rm DP}/\calO$ be the (open) Hilbert modular scheme classifying $\gerJ$-polarized Hilbert-Blumenthal abelian schemes ($\gerJ$-polarized HBAS's)
\[
\underline{A}/S=(A/S,\iota,\lambda,\alpha),
\]
 where, 
 
\begin{itemize}
\item $S$ is a locally noetherian $\calO$-scheme;
\item $A$ is an abelian scheme of relative dimension $d$ over $S$, equipped with
real multiplication $\iota\colon {\calO_F} \rightarrow \End_S(A)$;
\item  $\lambda$ is a polarization as in \cite{DP},
namely, an isomorphism $\lambda\colon (\calP_{A}, \calP_{A}^+)
\rightarrow (\gerJ, \gerJ^+)$ such that $A \otimes_{\calO_F} \gerJ \cong
A^\vee$. Here, $\calP_A = \Hom_{{\calO_F}}(A, A^\vee)^{\rm
sym}$, viewed as sheaf of projective ${\calO_F}$-modules of rank one in the \'etale topology, and $\calP^+_A$ is the cone of polarizations;
\item  the map $\alpha\colon (\calO_F/\gern)^2 \ra A[\gern]$ is an $\calO_F$-linear isomorphism of group schemes. 
%\item $\alpha$ is a rigid
%$\Gamma_{1}(N)$-level structure,  that is, $\alpha\colon \mu_N
%\otimes_{\ZZ} \gerd_F^{-1} \ra A$ is an ${\calO_F}$-equivariant
%closed immersion of group schemes.
\end{itemize}

 We fix  $\{\gerJ_1,\cdots,\gerJ_r\}$  a set of representatives for the strict class group ${\rm Cl}^+(F)$. We define 
 \[
 X^{\rm DP}=\bigsqcup_{i=1}^r X^{\rm DP}_{\gerJ_i}.
 \] 
 The Hilbert modular scheme $X^{\rm DP}$ is a normal scheme of relative dimension $d$ over $\Spec\calO$. Let $\Xbar^{\rm DP}=X^{\rm DP}  \otimes_{\calO} \FF$ be the special fibre, viewed as a closed subscheme of $X^{\rm DP}$.  
 
 Let $S$ be a locally noetherian $\calO$-scheme of connected components $\{S_i\}_{i\in I}$. By a HBAS over $S$ we mean a function $\nu: I \rightarrow \{1,...,r\}$, and a  collection $\{\uA_i\}_{i \in I}$, where $\uA_i$ is a $\gerJ_{\nu(i)}$-polarized HBAS over $S_i$ for each $i \in I$.

%  Let $\tX$ denote a toroidal compactification of $X$, and $\tXbar$ be its special fibre. For a fractional ideal $\gera$ of ${\calO_F}$, we denote by $X_\gera$ the locus in $X$ where the polarization module is isomorphic to $(\gera,\gera^+)$. Then $X_\gera$ is an irreducible component of $X$, and every irreducible component of $X$ is of the form $X_\gera$ for some fractional ideal $\gera$. We define $\Xbar_\gera$ similarly. We define $\tX_\gera$, $\tXbar_\gera$, respectively, by taking the Zariski closure of $X_\gera$, $\Xbar_\gera$ in their toroidal compactifications.

Let $\uA/S$ be a HBAS as above, and let $\epsilon_A: A \arr S$ be the structure map. Let  $\omega_{A/S}=\epsilon_{A,\ast} \Omega^1_{A/S}$ and $\HH_{A/S}=R^1\epsilon_{A,\ast} \Omega^\bullet_{A/S};$
these are locally free sheaves of ranks, respectively, $d,2d$ on $S$.  These sheaves carry an action of $\calO_F \otimes \calO_S$, and since $S$ is an $\calO$-scheme, we obtain decompositions as in Equation \ref{equation:decomposition} given by 
\[
\omega_{A/S}=\oplus_{\beta \in \BB}\ \omega_{A/S,\beta},
\]
\[
\HH_{A/S}=\oplus_{\beta \in \BB}\ \HH_{A/S,\beta}.
\] 
For $\beta \in \BB_\gerp$ the sheaves $\omega_{A/S,\beta} \subset \HH_{A/S,\beta}$ are locally free sheaves of rank $e_\gerp$ and $2e_\gerp$, respectively.  We denote the corresponding sheaves on $\Xbar^{\rm DP}$ obtained from the universal HBAS by  $\omega= \oplus_{\beta \in \BB}\ \omega_{\beta}$ and $\HH=\oplus_{\beta \in \BB}\ \HH_{\beta}$.
%This construction provides locally free sheaves $\omega_\beta$, $\HH_\beta$ of ranks $e_\gerp$ and $2e_\gerp$ respectively. 

By a Pappas-Rapoport filtration on $\omega_{A/S}$, we mean a collection of filtrations,
\[
\omega_{A/S}^{\bullet}=(\omega_{A/S,\beta}^\bullet)_{\beta \in \BB},
\]
where, for each $\gerp \in {\mathbb{S}}$ and $\beta \in \BB_\gerp$,  $\omega_{A/S,\beta}^\bullet$ is a filtration of $\omega_{A/S,\beta}$ given by 
\[
0=\omega_{A/S,\beta}^{(0)} \subset ...\subset \omega_{A/S,\beta}^{(i)}\subset ...\subset \omega_{A/S,\beta}^{(e_\gerp)}=\omega_{A/S,\beta},
\]
 satisfying the following conditions:

\begin{enumerate}

\item each $\omega_{A/S,\beta}^{(i)}$ is $\calO_F$-stable;
\item the graded piece $\omega_{A/S,\beta}^{(i)}/\omega_{A/S,\beta}^{(i-1)}$ is a locally free sheaf of rank one over $S$ (in particular, each $\omega_{A/S,\beta}^{(i)}$ is a locally free sheaf of rank $i$ over $S$);
\item the action of $\calO_F$ on $\omega_{A/S,\beta}^{(i)}/\omega_{A/S,\beta}^{(i-1)}$  is the scalar action via $\calO_F \arr \calO_\gerp \overset{\tau_{\beta,i}}{\arr} \calO$. In other words, the action of $\varpi_\gerp$  on $\omega_{A/S,\beta}^{(i)}/\omega_{A/S,\beta}^{(i-1)}$ is given by the scalar action of $\tau_{\beta,i}(\varpi_\gerp) \in \calO$.

\end{enumerate}

There is a scheme $X^{\rm PR}$ over $\Spec\calO$  classifying {\it filtered HBAS's}, 
\[
(A/S,\iota,\lambda,\alpha,\omega^\bullet_{A/S})=(\uA/S,\omega^\bullet_{A/S})
\]
where,

\begin{itemize}
\item $S$ is a locally noetherian $\calO$-scheme;
\item $\uA/S$ is a HBAS  of relative dimension $d$ as above;
\item $\omega^\bullet_{A/S}$ is a Pappas-Rapoport filtration on $\omega_{A/S}$.
\end{itemize}

The (open) Hilbert modular scheme $X^{\rm PR}$ is a scheme smooth of relative dimension $d$ over $\Spec\calO$ (see \cite{PR},\cite{S}).  There is an obvious forgetful morphism 
\[
\delta: X^{\rm PR} \arr X^{\rm DP}.
\]
 It is easy to see that $\delta$ is an isomorphism when restricted to the Rapoport loci of the source and the target, i.e., the loci where the HBAS in question, $\uA/S,$ satisfies the Rapoport condition: $\omega_{A/S}$ is a locally free $(\calO_S \otimes_\ZZ \calO_F)$-module of rank one.
 
We denote by  $\Xbar^{\rm PR}=X^{\rm PR}  \otimes_{\calO} \FF$ the special fibre of $X^{\rm PR}$.  Note that a Pappas-Rapoport filtration over an HBAS $\underline{A}$ over an $\FF$-scheme $S$ is a collection of $\calO_F$-stable filtrations indexed by $\gerp \in \mathbb{S}$, $\beta \in \BB_\gerp$: 
\[
0=\omega_{A/S,\beta}^{(0)} \subset ...\subset \omega_{A/S,\beta}^{(i)}\subset ...\subset \omega_{A/S,\beta}^{(e_\gerp)}=\omega_{A/S,\beta},
\]
where the graded pieces are locally free of rank one, and such that we have 
\[
[\varpi_\gerp] (\omega_{A/S,\beta}^{(i)})\subset \omega_{A/S,\beta}^{(i-1)},
\]
for all $1 \leq i \leq e_\gerp$.
We continue to denote by $\omega, \HH,\omega_\beta, \HH_\beta$ the pullback of these sheaves from $\Xbar^{\rm DP}$ to $\Xbar^{\rm PR}$. For any $(\beta,i)$ as above, we define $\omega_\beta^{(i)}$ to be $\omega_{A/S,\beta}^{(i)}$  where $A/S$ is taken to be the universal filtered HBAS over $\Xbar^{\rm PR}$. This is a locally free sheaf of rank $i$ on $\Xbar^{\rm PR}$. Let $\tau \in \Sigma=\Hom(F,\Qpbar)$. There are unique $\gerp \in \mathbb{S}$ and $\beta \in \BB_\gerp$ such that $\tau$ is the restriction to $F$ of $\tau_{\beta,i}$ for  some $1 \leq  i \leq e_\gerp$. We denote by $\omega_\tau=\omega_{\beta,i}$  the quotient sheaf $\omega_\beta^{(i)}/\omega_\beta^{(i-1)}$. Each $\omega_\tau$ is a locally free sheaf of rank one over $\Xbar^{\rm PR}$.

Let $\gerp \in {\mathbb{S}}$ and $\beta\in \BB_\gerp$. Consider $[\varpi_\gerp]: \HH_\beta \rightarrow \HH_\beta$.  Define, for $1 \leq i \leq e_\gerp$
 \begin{eqnarray}\label{eqn: H_tau}
 \HH_{\beta,i}=([\varpi_\gerp]^{-1}\omega_\beta^{(i-1)})/\omega_\beta^{(i-1)}.
 \end{eqnarray}
 Clearly, we have $\omega_{\beta,i} \subset \HH_{\beta,i}$. If $\tau=\tau_{\beta,i} \in \Sigma$, we set $\HH_\tau=\HH_{\beta,i}$.

  In \cite[\S 2]{RX} a sheaf $\HH^\prime_{\beta,i}$ (denoted there $\HH_{\gerp,j}^{(l)}$ for appropriate choices of $j,l$) is defined as
\[
\HH^\prime_{\beta,i}=\{ z \in   (\omega_{\beta}^{(i-1)})^\perp/ \omega_{\beta}^{(i-1)}\ \vert \ \varpi_\gerp z=0 \},
\]  
where $\perp$ is the orthogonal complement with respect to the pairing  $$\langle.,.\rangle: \HH_\beta \times \HH_\beta \rightarrow \calO_{\Xbar^{\rm PR}}$$ induced by the polarization.
 This pairing satisfies\footnote{See \cite[(2.9.2)]{RX}, but note that the validity of the formula in the case $p = 2$ can be deduced immediately from the flatness of $X^{\rm PR}$ over $\calO$ instead of the assertions in the paragraph following {\it loc. cit.}.  See also \cite[\S3.1]{D} for an analysis of the pairing and an analogue of Lemma~\ref{lem:orthogonal} over the integral model $X^{\rm PR}$.} 
 \begin{equation}\label{eqn: pairing} 
 \langle ax,x \rangle=\langle x,ax \rangle = 0
 \end{equation}
 for $x \in \HH_\beta, a\in \calO_\gerp$. It is proven in \cite[Cor. 2.10]{RX} that $\HH^\prime_{\beta,i}$ is a locally free sheaf of rank $2$, and there is an isomorphism $\wedge^2 \HH^\prime_{\beta,i} \rightarrow \calO_{\Xbar^{\rm PR}}$. 
 
 \begin{lem}\label{lem:orthogonal} We have $\HH_{\beta,i} =\HH^\prime_{\beta,i}$. In particular, $\HH_{\beta,i}$ is locally free of rank $2$ on $\Xbar^{\rm PR}$, and we have a (non-canonical) isomorphism $\wedge^2 \HH_{\beta,i} \cong \calO_{\Xbar^{\rm PR}}$.                     
 \end{lem}
 
 \begin{proof} Clearly, we have  $\HH_{\beta,i}  \supset \HH^\prime_{\beta,i}$. Since $\HH$ is locally free of rank $2$ over $\calO_{\Xbar^{\rm PR}} \otimes_{\ZZ} \calO_F$, we know for all $\beta \in \BB_\gerp$,  $\HH_\beta$ is locally free of rank $2$ over $\calO_{\Xbar^{\rm PR}} \otimes_{W(\FF_\gerp),\beta} \calO_\gerp\cong \calO_{\Xbar^{\rm PR}}[\![u]\!]/\langle u^{e_\gerp}\rangle$, where the isomorphism is chosen so that $\varpi_\gerp$ is sent to $u$. Let $x$ be a closed point of $\Xbar^{\rm PR}$ with residue field $\kappa$.  Hence $(\HH_\beta)_x \cong (\kappa[[u]]/\langle u^{e_\gerp}\rangle)^{\oplus  2}$, where a basis is chosen such that the submodule $\omega_{\beta}^{(i-1)}$ is given by  $\frac{u^a  \kappa[\![u]\!]}{\langle u^{e_\gerp}\rangle } \oplus  \frac{u^ b\kappa[\![u]\!]}{\langle u^{e_\gerp}\rangle }$ with $a,b$ positive integers satisfying $a+b=2e-i+1$. It follows that 
 \[
 (\HH_{\beta,i})_x = [\varpi_\gerp]^{-1}(\omega_\beta^{(i-1)})_x/(\omega_\beta^{(i-1)})_x \cong \frac{u^{a-1} \kappa[\![u]\!]/\langle u^{e_\gerp}\rangle }{u^a \kappa[\![u]\!]/\langle u^{e_\gerp}\rangle } \oplus  \frac{u^{b-1} \kappa[\![u]\!]/\langle u^{e_\gerp}\rangle}{u^b \kappa[\![u]\!]/\langle u^{e_\gerp}\rangle} 
 \]
 which has dimension $2$.  On the other hand, since $a+b>e$,  Equation \ref{eqn: pairing} implies that $ (\HH_{\beta,i})_x \subset  (\omega_{\beta}^{(i-1)})_x^\perp/ (\omega_{\beta}^{(i-1)})_x$. This ends the proof.
\end{proof}

  If $i>1$, the action of $\varpi_\gerp$ induces a map $$[\varpi_\gerp]:\HH_{\beta,i} \rightarrow \HH_{\beta,i-1}$$
 which clearly has image $\omega_{\beta,i-1}$.

\begin{defn} Let $R$ be an $\FF$-algebra. Let ${\bf k}=\sum k_\tau  {\bf e}_\tau \in \ZZ^{\Sigma}$. We define
\[
\omega^{\bf k}:=\bigotimes_\tau \omega_{\tau}^{\otimes k_\tau}
\]
The space of mod $p$ Hilbert modular forms of weight $\bf k$ and level $\Gamma(\gn)$ over $R$ is defined to be
\[
M_{\bf k}(\gn;R)=H^0(\Xbar^{PR} \otimes_{\FF} R, \omega^{\bf k}).
\]
\end{defn}
 Note that under our assumption that $d>1$, the Koecher principle implies that this is the same as the space of sections of a suitable extension of $\omega^{\bf k}$
 over a toroidal compactification of $\Xbar^{PR}$.
 
\section{The generalized partial Hasse invariants} \label{section: Hasse} We recall the definition of generalized partial Hasse invariants introduced in \cite{RX} which factorize the partial Hasse invariants defined by \cite{AG}.  Let $\gerp \in \mathbb{S}$ and $\beta \in \BB_\gerp$. The Verschiebung morphism of the universal filtered HBAS over  the Hilbert modular variety $\Xbar^{{\rm PR}}$ 
 \[
{\rm Ver}: A^{(\phi)} \rightarrow A 
\]
induces ${\rm Ver}^*_\beta: \HH_\beta \surjects \omega_{\phi^{-1}\beta}^{(\phi)} \subset  \HH_{\phi^{-1}\beta}^{(\phi)}$ (where $\cdot^{(\phi)}$ denotes pullback by absolute Frobenius on $\Xbar^{{\rm PR}}$).  The restricted map ${\rm Ver}^*_\beta: \omega_{\beta} \rightarrow \omega_{\phi^{-1}\beta}^{(\phi)}$ preserves the universal Pappas-Rapoport filtration and hence induces
\[
{\rm Ver}^*_\beta: \omega_{\beta,e_\gerp}=\omega_\beta/\omega_\beta^{(e_\gerp-1)} \rightarrow     \omega_{\phi^{-1}\beta,e_\gerp}^{\otimes p}=\omega_{\phi^{-1}\beta}^{(\phi)}/(\omega_{\phi^{-1}\beta}^{(e_\gerp-1)})^{(\phi)}.
\]
We let $H_\beta \in H^0(\Xbar^{\rm PR}, \omega_{\phi^{-1}\beta,e_\gerp}^{\otimes p} \otimes  \omega_{\beta,e_\gerp}^{-1})$ denote the section defined by the above morphism.
It is easy to see that on the Rapoport locus of $\Xbar^{\rm PR}$, $H_\beta$ corresponds under $\delta$ to a partial Hasse invariant (at the embedding $\beta$) defined by \cite{AG}  on the Rapoport locus of $\Xbar^{\rm DP}$. In \cite{RX}, ${\rm Ver}^*_\beta$ is factorized as follows. For $i>1$, let 
\[
m_{\beta,i}: \omega_{\beta,i}=\omega_\beta^{(i)}/\omega_\beta^{(i-1)} \rightarrow
\omega_{\beta,i-1}=\omega_\beta^{(i-1)}/\omega_\beta^{(i-2)}
\]
be the morphism defined by the action of $\varpi_\gerp \in \calO_\gerp$. Define a morphism $${\rm V}_{\beta,1}: \HH_{\beta,1} \rightarrow \omega_{\phi^{-1}\beta,e_\gerp}^{\otimes p}$$  as follows: Let $h$ be a local section of $\HH_{\beta,1}$. Since $[\varpi_\gerp]h=0$, it follows that $h=[\varpi_\gerp]^{e_\gerp-1}h'$ for $h'$ some local section of $\HH_\beta$. Define ${\rm V}_{\beta,1}(h)$ to be the image of  ${\rm Ver}_\beta^*(h') \in \omega_{\phi^{-1}\beta}^{(\phi)}$ in $\omega_{\phi^{-1}\beta,e_\gerp}^{\otimes p} $. It  follows that we have the factorization
\[
{\rm Ver}^*_\beta=
{{\rm V}_{\beta,1}}|_{\omega_{\beta,1}} \circ m_{\beta,2}  \circ \cdots  \circ m_{\beta,e_\gerp}.
\]

Let ${H}_{\beta,1} \in H^0(\Xbar^{\rm PR}, \omega_{\phi^{-1}\beta,e_\gerp}^{\otimes p} \otimes  \omega_{\beta,1}^{-1})$ be the section defined by the restriction of ${\rm V}_{\beta,1}$ to $\omega_{\beta,1}$.
Also, for $1<i\leq e_\gerp$, let ${H}_{\beta,i} \in H^0(\Xbar^{\rm PR}, \omega_{\beta,i}^{-1} \otimes  \omega_{\beta,i-1})$ be the section defined by $m_{\beta,i}$. It follows that the pullback under $\delta$ of the partial Hasse invariant (at $\beta$) defined in \cite{AG}  on the Rapoport locus of $\Xbar^{\rm DP}$ extends from the Rapoport locus of $\Xbar^{\rm PR}$ to $\Xbar^{\rm PR}$, and factorizes as ${H}_\beta = \Pi_{i=1}^{e_\gerp} {H}_{\beta,i}$.  

Let $\tau \in \Sigma=\Hom(F,\Qpbar)$, so that $\tau=\tau_{\beta,i}$ for a unique $\gerp \in \mathbb{S}$, $\beta \in \BB_\gerp$, and   $1 \leq i \leq e_\gerp$.  We define ${H}_\tau=H_{\beta,i}$. Then $H_\tau$ is a mod $p$ Hilbert modular form of weight ${\bf h}_\tau$ which equals  ${\bf e}_{\sigma^{-1}\tau}-{\bf e}_\tau$ if $i>1$, and $p{\bf e}_{\sigma^{-1}\tau}-{\bf e}_\tau$ if $i=1$.

%In \cite[\S 8.19]{AG} it is shown that  if $f$ is a
%non-zero element of $M_{\bf k}(\gn, \Fpbar)$, then there is a unique maximal element of the set
%$$\left\{\left.\, \sum_\tau x_\tau {\bf e}_\tau \in \ZZ_{\ge 0}^{\Hom(F,\Qpbar)}\, \,\right|\, \,  f = g\prod_\tau (H_\tau^{\gerJ})^{x_\tau}\,\mbox{for some $g \in M_{{\bf k} - \sum_\tau x_\tau {\bf h}_\tau}(\gn, \Fpbar)$}\,\right\}$$
%under the usual partial ordering.  The weight filtration of $f$ is defined to be
%\[
%\Phi(f)={\bf k} - \sum_\tau a_\tau {\bf h}_\tau
%\]
% (i.e., the weight of $g$), where $\sum_\tau a_\tau {\bf e}_\tau$ is the above maximal element.

%\subsection{The Goren-Oort stratification on $\Xbar^{{\rm PR}}$}

\section{The Goren-Oort stratification on $\Xbar^{{\rm PR}}$}

The partial Hasse invariants ${H_\tau}$ have been used by Reduzzi and Xiao to define an analogue of the Goren-Oort stratification on $\Xbar^{{\rm PR}}$ (see \cite{RX}).  For any $\tau \in \Sigma=\Hom(F,\Qpbar)$,  define $Z_{\tau}$ to be the closed subscheme of $\Xbar^{{\rm PR}}$ given by the vanishing of ${H_\tau}$. For $T \subset \Sigma$, define a closed subscheme of $\Xbar^{{\rm PR}}$
\[
Z_T=\bigcap_{\tau \in T} Z_\tau,
\]
as well as  an open subscheme of $Z_T$ given by 
\[
W_T=Z_T -\bigcup_{\tau \not \in T} Z_\tau.
\]
Reduzzi and Xiao show \cite[Thm. 3.10]{RX} that if $T \neq \emptyset$, then $Z_T$ is a smooth proper subscheme of $\Xbar^{\rm PR}$ (see also \cite[Prop. 4.2.1]{D}). In the following, we prove some properties of this stratification that we use in the proof of our main theorem.

\begin{lem} Let $\gerp \in \mathbb{S}$,  $\beta \in \BB_\gerp$, and $1 \leq i \leq e_\gerp$. Let $T \subset \Sigma$ and $\tau_{\beta,i} \in T$. Then, if $i>1$, we have 
\[
\omega_{\beta,i}|_{Z_T} \cong \omega_{\beta,i-1}^{-1}|_{Z_T}.
\]
For $i=1$, we have 
\[
\omega_{\beta,1}|_{Z_T} \cong \omega_{\phi^{-1}\beta,e_\gerp}^{-p}|_{Z_T}.
\]
Let $U=\Xbar^{{\rm PR}}-Z_{\tau_{\beta,i}}$. Then if $i>1$, we have 
\[
\omega_{\beta,i}|_U \cong \omega_{\beta,i-1}|_U.
\]
For $i=1$, we have 
\[
\omega_{\beta,1}|_U \cong \omega_{\phi^{-1}\beta,e_\gerp}^{p}|_U.
\]
\end{lem}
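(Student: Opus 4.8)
The plan is to handle all four isomorphisms by a single mechanism built on the rank-$2$ sheaves $\HH_{\beta,i}$ of the previous lemma together with the maps cutting out the Hasse invariants. The two statements over $U=\Xbar^{\rm PR}-Z_{\tau_{\beta,i}}$ are immediate: by construction $H_{\beta,i}$ is a global section of the line bundle $\omega_{\beta,i}^{-1}\otimes\omega_{\beta,i-1}$ (if $i>1$) or $\omega_{\phi^{-1}\beta,e_\gerp}^{\otimes p}\otimes\omega_{\beta,1}^{-1}$ (if $i=1$), and $U$ is exactly its non-vanishing locus; a line bundle admitting a nowhere-vanishing section over $U$ is trivial there, so cancelling the trivial factor gives $\omega_{\beta,i}|_U\cong\omega_{\beta,i-1}|_U$ for $i>1$ and $\omega_{\beta,1}|_U\cong\omega_{\phi^{-1}\beta,e_\gerp}^{p}|_U$ for $i=1$.

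For the statements over $Z_T$, since $\tau_{\beta,i}\in T$ we have $Z_T\subseteq Z_{\tau_{\beta,i}}$, so it is enough to construct the isomorphisms on $Z_{\tau_{\beta,i}}$ and restrict. The input from the previous lemma is that $\HH_{\beta,i}$ is locally free of rank $2$ with trivial determinant; since $\omega_{\beta,i}\subset\HH_{\beta,i}$ is a line subbundle, the determinant pairing gives $\HH_{\beta,i}/\omega_{\beta,i}\cong\omega_{\beta,i}^{-1}$ on all of $\Xbar^{\rm PR}$. Next I would use the surjection $q\colon\HH_{\beta,i}\twoheadrightarrow\calQ$ underlying $H_{\beta,i}$: for $i>1$ this is $q=[\varpi_\gerp]$ onto $\calQ=\omega_{\beta,i-1}$, and for $i=1$ it is $q=\mathrm{V}_{\beta,1}$ onto $\calQ=\omega_{\phi^{-1}\beta,e_\gerp}^{\otimes p}$; in both cases the restriction $q|_{\omega_{\beta,i}}$ is the morphism defining $H_{\beta,i}$. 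Over $Z_{\tau_{\beta,i}}$ the section $H_{\beta,i}$ vanishes, so the composite $\omega_{\beta,i}\hookrightarrow\HH_{\beta,i}\to\calQ$ is zero there; hence $\omega_{\beta,i}|_{Z_{\tau_{\beta,i}}}$ factors through $\ker q$, and the resulting map of line bundles $\omega_{\beta,i}|_{Z_{\tau_{\beta,i}}}\to\ker q|_{Z_{\tau_{\beta,i}}}$ is a fibrewise isomorphism (both lines coincide inside the two-dimensional fibre), hence an isomorphism of subbundles of $\HH_{\beta,i}|_{Z_{\tau_{\beta,i}}}$. Therefore $\omega_{\beta,i}^{-1}|_{Z_{\tau_{\beta,i}}}\cong\HH_{\beta,i}/\omega_{\beta,i}|_{Z_{\tau_{\beta,i}}}\cong\HH_{\beta,i}/\ker q|_{Z_{\tau_{\beta,i}}}\cong\calQ|_{Z_{\tau_{\beta,i}}}$, which upon restriction to $Z_T$ yields $\omega_{\beta,i}\cong\omega_{\beta,i-1}^{-1}$ for $i>1$ and $\omega_{\beta,1}\cong\omega_{\phi^{-1}\beta,e_\gerp}^{-p}$ for $i=1$.

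The one point requiring real verification is that $q$ is surjective with line-bundle kernel over all of $\Xbar^{\rm PR}$, so that $\ker q$ is a subbundle and the fibrewise comparison above is legitimate. For $i>1$ this is read off the fibrewise model from the previous lemma: writing $(\HH_\beta)_x\cong(\kappa[\![u]\!]/u^{e_\gerp})^{\oplus2}$ with $\omega_\beta^{(i-1)}$ corresponding to $u^{a}\oplus u^{b}$, the bound $i-1\le e_\gerp-1$ forces $a,b\ge1$, whence $\omega_\beta^{(i-1)}\subset u\HH_\beta=\Im[\varpi_\gerp]$ and $[\varpi_\gerp]\colon\HH_{\beta,i}\to\omega_{\beta,i-1}$ is onto at every point. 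The case $i=1$ is the main obstacle, since here $\HH_{\beta,1}=\ker([\varpi_\gerp]\colon\HH_\beta\to\HH_\beta)$ and $\mathrm{V}_{\beta,1}$ is defined through Verschiebung. I would prove surjectivity fibrewise using $[\varpi_\gerp]^{e_\gerp}=0$: the relation $[\varpi_\gerp]^{e_\gerp-1}\HH_\beta=\ker[\varpi_\gerp]=\HH_{\beta,1}$ lets one write every local section $h$ of $\HH_{\beta,1}$ as $[\varpi_\gerp]^{e_\gerp-1}h'$, reducing the surjectivity of $\mathrm{V}_{\beta,1}$ to that of $\mathrm{Ver}^*_\beta$ onto $\omega_{\phi^{-1}\beta}^{(\phi)}$ followed by projection to the top graded piece $\omega_{\phi^{-1}\beta,e_\gerp}^{\otimes p}$. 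With surjectivity established in both cases, the determinant argument closes all four isomorphisms uniformly.
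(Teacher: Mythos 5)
Your proof is correct and follows essentially the same route as the paper: over $U$ the nowhere-vanishing section $H_{\beta,i}$ trivializes the relevant line bundle, and over $Z_T$ the vanishing of $H_{\beta,i}$ identifies $\omega_{\beta,i}$ with the kernel of the surjection $\HH_{\beta,i}\twoheadrightarrow\calQ$, yielding the short exact sequence $0\to\omega_{\beta,i}\to\HH_{\beta,i}\to\calQ\to 0$ whose determinant, together with $\wedge^2\HH_{\beta,i}\cong\calO$, gives the isomorphism. Your phrasing via the determinant pairing and the explicit fibrewise verification of the surjectivity of $q$ merely spell out what the paper compresses into ``rank considerations.''
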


\begin{proof} 
Assume $i>1$ and $\tau_{\beta,i} \in T$. Consider the map $[\varpi_\gerp]:\HH_{\beta,i} \rightarrow \HH_{\beta,i-1}$ defined in \S \ref{subsection: HMVtame}. It has image $\omega_{\beta,i-1}$. By assumption, $H_{\beta,i}=0$ on $Z_T$, and hence the map 
$m_{\beta,i}: \omega_{\beta,i} \rightarrow \omega_{\beta,i-1}$ vanishes on $Z_T$. By definition, this implies that $[\varpi_\gerp]\omega_\beta^{(i)} \subset \omega_\beta^{(i-2)}$. In particular, $\omega_{\beta,i}$ sits in the kernel of the map $[\varpi_\gerp]$ above. Rank considerations show that we have a short exact sequence of sheaves on $Z_T$:
\[
0 \rightarrow \omega_{\beta,i} \rightarrow \HH_{\beta,i} \rightarrow \omega_{\beta,i-1} \rightarrow 0,
\]
from which it follows $\calO_{Z_T} \cong \wedge^2_{\calO_{Z_T}} \HH_{\beta,i} \cong \omega_{\beta,i}|_{Z_T} \otimes \omega_{\beta,i-1}|_{Z_T}$.

Now assume $\tau_{\beta,1} \in T$. Consider the morphism ${\rm V}_{\beta,1}: \HH_{\beta,1} \rightarrow \omega^{\otimes p}_{\phi^{-1}\beta,e_\gerp}$ defined in \S \ref{section: Hasse}. Since by assumption $H_{\beta,1}=0$ on $Z_T$, we have ${\rm V}_{\beta,1}(\omega_{\beta,1})=0$. Rank considerations imply we have an exact sequence
\[
0 \rightarrow \omega_{\beta,1} \rightarrow \HH_{\beta,1} \rightarrow \omega_{\phi^{-1}\beta,e_\gerp}^{\otimes p} \rightarrow 0.
\]
It follows that $\calO_{Z_T} \cong \wedge^2_{\calO_{Z_T}} \HH_{\beta,1} \cong \omega_{\beta,1}|_{Z_T} \otimes \omega^{\otimes p}_{\phi^{-1}\beta,e_\gerp}|_{Z_T}$.

The other two isomorphisms follow immediately from the definition of Hasse invariants.

\end{proof}
 
 \begin{cor} \label{cor: torsion} Let $T \subset \Sigma$. For any $\tau \in \Sigma$, we have 
\[
\omega_\tau^{\otimes (p^{2f_\gerp}- 1)}|{W_T} \cong \calO_{W_T}.
\]
\end{cor}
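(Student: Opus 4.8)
The plan is to read off from the preceding Lemma a single recursive relation between consecutive line bundles $\omega_\tau$ along the permutation $\sigma$, valid on all of $W_T$, and then to iterate it once around a full $\sigma$-orbit.

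First I would record the two inclusions $W_T \subset Z_T$ and $W_T \subset U_{\tau'} := \Xbar^{\rm PR} - Z_{\tau'}$ for every $\tau' \notin T$, which are immediate from the definition $W_T = Z_T - \bigcup_{\tau' \notin T} Z_{\tau'}$. Restricting the four isomorphisms of the Lemma to $W_T$ then packages into a single statement: for every $\tau = \tau_{\beta,i} \in \Sigma$ there is an isomorphism on $W_T$
\[
\omega_\tau \cong \omega_{\sigma^{-1}\tau}^{\otimes \epsilon_\tau n_\tau},
\]
where $n_\tau \in \{1,p\}$ is as in the weight ${\bf h}_\tau$ (so $n_\tau = p$ precisely when $i=1$), and $\epsilon_\tau = -1$ if $\tau \in T$ while $\epsilon_\tau = +1$ if $\tau \notin T$. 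Here I use that $\sigma^{-1}\tau_{\beta,i} = \tau_{\beta,i-1}$ for $i>1$ and $\sigma^{-1}\tau_{\beta,1} = \tau_{\phi^{-1}\beta,e_\gerp}$, which matches the $i>1$ and $i=1$ cases of the Lemma respectively, the sign being chosen according to whether we are on $Z_T$ (membership in $T$) or on $U_\tau$ (non-membership).

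Next I would iterate. Fixing the prime $\gerp$ with $\tau \in \Sigma_\gerp$, the set $\Sigma_\gerp$ is a single $\sigma$-orbit of length $N := e_\gerp f_\gerp$: indeed $\sigma$ advances the index $i$ through $1,\dots,e_\gerp$ and then applies $\phi$ to $\beta$, and $\phi$ permutes $\BB_\gerp$ as a single $f_\gerp$-cycle. Composing the displayed isomorphism for $\sigma^{1}\tau, \sigma^{2}\tau, \dots, \sigma^{N}\tau = \tau$ gives, on $W_T$,
\[
\omega_\tau \cong \omega_\tau^{\otimes m}, \qquad m = \prod_{j=1}^{N} \epsilon_{\sigma^j\tau}\, n_{\sigma^j\tau}.
\]
The product of the $n_{\sigma^j\tau}$ equals $p^{f_\gerp}$, since as $\sigma^j\tau$ runs over $\Sigma_\gerp$ exactly the $f_\gerp$ embeddings $\tau_{\beta,1}$ ($\beta \in \BB_\gerp$) contribute a factor $p$ and all others contribute $1$, while the product of the signs is some $\pm 1$. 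Hence $m = \pm p^{f_\gerp}$, and $\omega_\tau^{\otimes (m-1)}$ is trivial on $W_T$.

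Finally I would invoke divisibility to reach the stated uniform exponent. If $m = p^{f_\gerp}$ then $\omega_\tau^{\otimes(p^{f_\gerp}-1)} \cong \calO_{W_T}$, and if $m = -p^{f_\gerp}$ then, after inverting, $\omega_\tau^{\otimes(p^{f_\gerp}+1)} \cong \calO_{W_T}$. Since $p^{2f_\gerp}-1 = (p^{f_\gerp}-1)(p^{f_\gerp}+1)$, in either case the relevant exponent divides $p^{2f_\gerp}-1$, so raising to the appropriate multiple yields $\omega_\tau^{\otimes(p^{2f_\gerp}-1)} \cong \calO_{W_T}$. I do not anticipate a genuine obstacle: the argument is forced once one has the per-embedding relation, and the only points requiring care are the bookkeeping of the signs $\epsilon_\tau$ and exponents $n_\tau$ around the orbit, together with the observation---which makes the iteration close up---that $\Sigma_\gerp$ is a single $\sigma$-orbit of length $e_\gerp f_\gerp$.
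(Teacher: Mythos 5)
Your argument is correct and is precisely the one the paper intends: the corollary is stated without proof, being the immediate consequence of restricting the preceding Lemma's four isomorphisms to $W_T$ (using $W_T\subset Z_T$ and $W_T\subset \Xbar^{\rm PR}-Z_{\tau'}$ for $\tau'\notin T$) and iterating the resulting relation $\omega_\tau\cong\omega_{\sigma^{-1}\tau}^{\otimes\epsilon_\tau n_\tau}$ once around the $\sigma$-orbit $\Sigma_\gerp$ of length $e_\gerp f_\gerp$. Your bookkeeping of the signs, the product $p^{f_\gerp}$ of the $n_\tau$'s, and the divisibility $(p^{f_\gerp}\mp 1)\mid(p^{2f_\gerp}-1)$ is all accurate.
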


\begin{cor}\label{cor: torsion on closed stratum}  Let $\gerp \in \mathbb{S}$. If $\Sigma_\gerp \subset T \subset \Sigma$, then for all $\tau \in \Sigma_\gerp$ we have 
\[
\omega_\tau^{\otimes (p^{2f_\gerp}- 1)}|{Z_T} \cong \calO_{Z_T}.
\]
\end{cor}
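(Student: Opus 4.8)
The plan is to iterate the isomorphisms furnished by the preceding Lemma around the full $\sigma$-orbit $\Sigma_\gerp$, working in $\operatorname{Pic}(Z_T)$ so that the compositions become additions of line-bundle classes. First I would record the orbit combinatorics: since $\sigma$ sends $\tau_{\beta,i}$ to $\tau_{\beta,i+1}$ for $i<e_\gerp$ and $\tau_{\beta,e_\gerp}$ to $\tau_{\phi\beta,1}$, and since $\phi$ cycles through the $f_\gerp$ elements of $\BB_\gerp$, the set $\Sigma_\gerp$ is a single $\sigma$-orbit of length $L=e_\gerp f_\gerp$; among its elements exactly the $f_\gerp$ embeddings of the form $\tau_{\beta,1}$ carry the ``$i=1$'' exponent $p$, and all others carry exponent $1$.

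Next, the hypothesis $\Sigma_\gerp\subset T$ guarantees that $H_\tau$ vanishes on $Z_T$ for every $\tau\in\Sigma_\gerp$, so only the ``vanishing'' half of the Lemma is relevant. Writing $n_\tau\in\{1,p\}$ for the exponent above, both cases of that half combine into the single relation $[\omega_\tau]=-n_\tau[\omega_{\sigma^{-1}\tau}]$ in $\operatorname{Pic}(Z_T)$, valid for all $\tau\in\Sigma_\gerp$ (using $\sigma^{-1}\tau_{\beta,i}=\tau_{\beta,i-1}$ for $i>1$ and $\sigma^{-1}\tau_{\beta,1}=\tau_{\phi^{-1}\beta,e_\gerp}$).

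I would then iterate this relation. Travelling twice around the orbit and using $\sigma^{2L}=\mathrm{id}$ on $\Sigma_\gerp$, one returns to $[\omega_\tau]$ with accumulated multiplier $\prod_{j=0}^{2L-1}(-n_{\sigma^{-j}\tau})=(-1)^{2L}\big(\prod_{\tau'\in\Sigma_\gerp}n_{\tau'}\big)^2=(p^{f_\gerp})^2=p^{2f_\gerp}$. Thus $[\omega_\tau]=p^{2f_\gerp}[\omega_\tau]$, i.e. $(p^{2f_\gerp}-1)[\omega_\tau]=0$, which is exactly $\omega_\tau^{\otimes(p^{2f_\gerp}-1)}|_{Z_T}\cong\calO_{Z_T}$. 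Going around the orbit twice, rather than once, is what neutralizes the sign $(-1)^L$ uniformly, and is the reason the exponent $2f_\gerp$ (and not $f_\gerp$) appears.

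The only delicate point, and the sole place the argument could slip, is the orbit bookkeeping in the first step: confirming that $\sigma$ restricts to a single $e_\gerp f_\gerp$-cycle on $\Sigma_\gerp$ and that precisely $f_\gerp$ of the multipliers equal $p$. Everything after that is a formal manipulation of classes in $\operatorname{Pic}(Z_T)$. Finally, I would remark that the companion Corollary on $W_T$ follows by the same computation: there the individual signs vary according to whether $\tau\in T$ (mixing the two halves of the Lemma), but the double traversal again squares the multiplier to $\big(\prod_{\tau'}n_{\tau'}\big)^2=p^{2f_\gerp}$, so the signs are immaterial.
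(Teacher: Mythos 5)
Your argument is correct and is precisely the computation the paper leaves implicit: the corollary is stated without proof as an immediate consequence of the preceding lemma, and the intended route is exactly your iteration of the relation $[\omega_\tau]=-n_\tau[\omega_{\sigma^{-1}\tau}]$ in $\operatorname{Pic}(Z_T)$ around the single $\sigma$-orbit $\Sigma_\gerp$ of length $e_\gerp f_\gerp$ (twice, to kill the sign), using that $\Sigma_\gerp\subset T$ keeps every step of the iteration in the ``vanishing'' case of the lemma. Your orbit bookkeeping (exactly $f_\gerp$ multipliers equal to $p$, the rest equal to $1$) and the remark on the companion corollary for $W_T$ are likewise correct.
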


\begin{lem}\label{lem: ample} There are integers $m_\tau$ for $\tau \in \Sigma$ such that $ \bigotimes_{\tau} \omega_\tau^{\otimes m_\tau}$ is an ample line bundle on $\Xbar^{{\rm PR}}$.
\end{lem}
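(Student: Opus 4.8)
The plan is to realize the desired line bundle as a correction of the Hodge bundle along the resolution $\delta$. The first observation is that the all-ones combination is the determinant of the Hodge bundle: since the $\omega_{\beta,i}$ are the graded pieces of the Pappas--Rapoport filtration of $\omega_\beta$, one has $\det\omega_\beta=\bigotimes_{i=1}^{e_\gerp}\omega_{\beta,i}$, whence $\bigotimes_{\tau}\omega_\tau=\bigotimes_{\beta}\det\omega_\beta=\det\omega$ on $\Xbar^{\rm PR}$; moreover, as $\omega$ is pulled back from $\Xbar^{\rm DP}$ along $\delta$, this identifies $\bigotimes_\tau\omega_\tau$ with $\delta^*(\det\omega)$, where now $\det\omega$ denotes the Hodge bundle on $\Xbar^{\rm DP}$. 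On $\Xbar^{\rm DP}$ the bundle $\det\omega$ is ample: it extends to an ample bundle on the projective minimal compactification (which is $\Proj$ of the graded ring of parallel weight forms), and ampleness passes to the open subscheme $\Xbar^{\rm DP}$; since $d>1$, Koecher's principle ensures nothing is lost in moving between the open variety and its compactification.

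If $p$ is unramified in $F$ then every $e_\gerp=1$, the morphism $\delta$ is an isomorphism, and $\bigotimes_\tau\omega_\tau=\det\omega$ is already ample (all $m_\tau=1$). In general, however, $\delta\colon \Xbar^{\rm PR}\to\Xbar^{\rm DP}$ is a projective birational morphism which is an isomorphism over the Rapoport locus but contracts positive-dimensional fibres over the non-Rapoport locus (by \cite{PR},\cite{S} it is a resolution built from towers of $\PP^1$-bundles). Consequently $\delta^*(\det\omega)=\bigotimes_\tau\omega_\tau$ is merely nef and big on $\Xbar^{\rm PR}$, being trivial on the fibres of $\delta$, and a correction supported on those fibres is required.

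To supply the correction I would exhibit a $\delta$-ample line bundle of the form $\bigotimes_\tau\omega_\tau^{\otimes a_\tau}$. Relative ampleness may be tested on the fibres of $\delta$; these are towers of $\PP^1$-bundles arising from the choices of Pappas--Rapoport filtration $\omega_\beta^{(\bullet)}$, and the successive quotient line bundles of this flag are precisely the $\omega_{\beta,i}$ for $1\le i\le e_\gerp$. On such a flag-type fibre, a combination $\bigotimes_i\omega_{\beta,i}^{\otimes a_{\beta,i}}$ with strictly decreasing weights $a_{\beta,1}>\cdots>a_{\beta,e_\gerp}$ restricts to a relatively ample bundle, by the usual positivity of such weights; setting $a_{\tau_{\beta,i}}=a_{\beta,i}$ then yields a $\delta$-ample $M=\bigotimes_\tau\omega_\tau^{\otimes a_\tau}$.

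It remains only to combine the two bundles. By the standard criterion that, for a projective morphism $f$, an $f$-ample bundle twisted by a sufficiently high pullback of an ample bundle on the base is ample, the line bundle $M\otimes\delta^*(\det\omega)^{\otimes n}=\bigotimes_\tau\omega_\tau^{\otimes(a_\tau+n)}$ is ample on $\Xbar^{\rm PR}$ for $n\gg 0$; taking $m_\tau=a_\tau+n$ completes the proof. The hard part will be the middle step: rigorously identifying the fibres of the Pappas--Rapoport resolution and matching their tautological bundles with the graded pieces $\omega_{\beta,i}$, so as to confirm that a monotone combination is genuinely $\delta$-ample. This is exactly where the explicit local model of \cite{PR},\cite{S} (and the bundles constructed in \cite{RX}) must be brought to bear.
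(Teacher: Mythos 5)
Your architecture is exactly the paper's: identify $\bigotimes_\tau\omega_\tau$ with $\delta^*(\wedge^d\omega)$ for the ample determinant of the Hodge bundle on $\Xbar^{\rm DP}$ (the paper cites \cite[Theorem 4.3.(ix)]{C} for that ampleness), exhibit a $\delta$-relatively ample combination $\bigotimes_\tau\omega_\tau^{\otimes a_\tau}$, and then twist by a large power of the pullback (the paper invokes \cite[Lem.\ 29.35.7]{StackProject} for this last step). Two caveats. First, the step you defer as ``the hard part'' is precisely the one the paper does not reprove either: it quotes \cite[Lemma 2.7]{RX}, which establishes that $\bigotimes_{\beta}\bigotimes_{i=1}^{e_\gerp}\omega_{\beta,i}^{\otimes(i-e_\gerp-1)}$ is $\delta$-ample, so citing that result would close your gap. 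Second, and more substantively, your guessed monotonicity is reversed: the exponents $i-e_\gerp-1$ used there are strictly \emph{increasing} in $i$, not decreasing. This is forced by the convention that $\omega_{\beta,1}=\omega_\beta^{(1)}$ is the tautological \emph{sub}bundle on a fibre of $\delta$: on a $\PP^1$-fibre one has $\omega_{\beta,1}\cong\calO(-1)$ and $\omega_{\beta,2}\cong\calO(1)$ up to twists trivial on the fibre, so $\omega_{\beta,1}^{\otimes a_1}\otimes\omega_{\beta,2}^{\otimes a_2}$ restricts to $\calO(a_2-a_1)$, which is ample exactly when $a_2>a_1$. With your decreasing weights the bundle $M$ would be anti-ample on the positive-dimensional fibres, and no twist by a pullback from $\Xbar^{\rm DP}$ could repair that.
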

\begin{proof}  Let $\delta: \Xbar^{\rm PR} \rightarrow \Xbar^{\rm DP}$ be the forgetful morphism. Let $\omega \subset \HH$ be the Hodge bundle on $\Xbar^{\rm DP}$. Then the determinant $\wedge^d\omega$ is an ample line bundle on $\Xbar^{\rm DP}$ (see, for example, \cite[Theorem 4.3.(ix)]{C}) and we have 
\[
\delta^*(\wedge^d\omega) = \bigotimes_{\gerp \in \mathbb{S}} \bigotimes_{\beta \in \BB_\gerp} \bigotimes^{e_\gerp}_{i=1} \ \ \omega_{\beta,i}.
\]
It is easy to see  that $\calM=\bigotimes_{\gerp \in \mathbb{S}} \bigotimes_{\beta \in \BB_\gerp} \bigotimes^{e_\gerp}_{i=1} \omega^{\otimes (i-e_\gerp-1)}_{\beta,i}$  is relatively ample with respect to $\delta$ (see \cite[Lemma 2.7]{RX}). By \cite[Lemma 0892]{StackProject}, it  follows  that for $a$ large enough, $\calM \otimes \delta^*(\wedge^d\omega)^{\otimes a}=\bigotimes_{\gerp \in \mathbb{S}} \bigotimes_{\beta \in \BB_\gerp} \bigotimes^{e_\gerp}_{i=1} \omega^{\otimes (a+ i-e_\gerp-1)}_{\beta,i}$ is ample on $\Xbar^{\rm PR}$. 
\end{proof}  
   We can now prove the main result of this section.
 \begin{prop}\label{prop: quasi-affine} Let $T \subset \Sigma$. Then $W_T$ is a quasi-affine scheme.
 \end{prop}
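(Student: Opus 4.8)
The plan is to reduce the statement to the standard criterion that a quasi-compact, quasi-separated scheme is quasi-affine if and only if its structure sheaf is an ample invertible sheaf (see \cite{StackProject}, and note that $\calO_X$ ample means precisely that the canonical morphism $X \to \Spec \Gamma(X,\calO_X)$ is a quasi-compact open immersion). First I would record that $W_T$ is a locally closed subscheme of $\Xbar^{\rm PR}$: by construction $Z_T$ is closed and $W_T$ is open in $Z_T$, so $W_T = (\text{open}) \cap (\text{closed})$ inside $\Xbar^{\rm PR}$. Since $\Xbar^{\rm PR}$ is Noetherian, $W_T$ is Noetherian and hence quasi-compact and quasi-separated, so it suffices to prove that $\calO_{W_T}$ is ample.

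To exhibit an ample structure sheaf I would combine the two preceding results. By Lemma~\ref{lem: ample} there are integers $m_\tau$ such that $\calM = \bigotimes_\tau \omega_\tau^{\otimes m_\tau}$ is ample on $\Xbar^{\rm PR}$. Ampleness is preserved under restriction to a locally closed subscheme, so $\calM|_{W_T}$ is ample on $W_T$, and therefore so is every positive tensor power $\calM^{\otimes N}|_{W_T}$. On the other hand, Corollary~\ref{cor: torsion} shows that each $\omega_\tau|_{W_T}$ is torsion, namely $\omega_\tau^{\otimes(p^{2f_\gerp}-1)}|_{W_T} \cong \calO_{W_T}$. Taking $N$ to be a common multiple of the integers $p^{2f_\gerp}-1$ for $\gerp \in \mathbb{S}$, each factor $\omega_\tau^{\otimes m_\tau N}|_{W_T}$ becomes trivial, whence
\[
\calM^{\otimes N}|_{W_T} \cong \bigotimes_\tau \omega_\tau^{\otimes m_\tau N}|_{W_T} \cong \calO_{W_T}.
\]
Thus $\calO_{W_T}$ is isomorphic to an ample invertible sheaf, hence is itself ample, and the criterion above gives that $W_T$ is quasi-affine.

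The proof is short because the substantive work is already packaged into Lemma~\ref{lem: ample} and Corollary~\ref{cor: torsion}; the one point needing a little care is the assertion that ampleness restricts to locally closed subschemes. For a closed immersion this is standard, and for an open subscheme $U$ it follows because the affine opens $X_s$ with $s \in \Gamma(X,\calM^{\otimes n})$ pull back to affine opens covering $U$; composing handles the locally closed case. I expect the only conceptual step to be the (perhaps counterintuitive) observation that a \emph{trivial} line bundle can be ample on $W_T$ precisely because $W_T$ fails to be proper — indeed this triviality-plus-ampleness is exactly the signature of quasi-affineness. No case distinction is needed for the various $T$: when $T = \emptyset$ we have $W_\emptyset = \Xbar^{\rm PR} - \bigcup_\tau Z_\tau$, still open and hence locally closed, and an empty stratum is vacuously quasi-affine.
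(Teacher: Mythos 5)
Your proof is correct and follows essentially the same route as the paper: both arguments restrict the ample line bundle $\bigotimes_\tau \omega_\tau^{\otimes m_\tau}$ from Lemma~\ref{lem: ample} to $W_T$, invoke Corollary~\ref{cor: torsion} to see it is torsion there, and conclude quasi-affineness from the existence of an ample torsion (equivalently, ample trivial) line bundle on the quasi-compact scheme $W_T$. The only difference is that you spell out the reduction to ampleness of $\calO_{W_T}$ and the restriction of ampleness to locally closed subschemes, which the paper leaves implicit.
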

  
  \begin{proof} Since $W_T$ is quasi-compact, it is enough to show that there is a line bundle on $W_T$ that is both ample and torsion. Let $m_\tau$ be integers as in Lemma \ref{lem: ample}. Then $\calL=\bigotimes_{\tau} \omega_\tau^{\otimes m_\tau}$ is ample on $W_T$. On the other hand, by Corollary \ref{cor: torsion}, each line bundle $\omega_\tau$, and hence $\calL$, is a torsion line bundle on $W_T$. 
  \end{proof}
  
  \begin{rem} In various works, Oort proves quasi-affineness of strata on Shimura varieties via the existence of a torsion ample line bundle on them. In \cite{GO} this approach is used to prove that open Goren-Oort strata on a Hilbert modular variety are quasi-affine in the case $p$ is unramified in $F$;  to construct a torsion ample line bundle on an open stratum, the authors use the relationship between the Ekedahl-Oort stratification and the type stratification. In \cite{AG2} it is shown that the usual definition of  the Ekedahl-Oort stratification is inadequate by giving an example where there are infinitely many isomorphism classes for the $p$-torsion over $\Xbar^{\rm DP}$. The proof of Proposition \ref{prop: quasi-affine} works directly over the generalized Goren-Oort strata, giving a new proof in the case $p$ is unramified.
  \end{rem}

 \begin{cor}\label{cor: intersect} Let $T \subset \Sigma$. Let $C$ be a (non-empty) irreducible component of $Z_T$. Then for all $\tau \not \in T$, we have $C \cap Z_\tau \neq\emptyset$.
\end{cor}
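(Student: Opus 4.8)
The plan is to combine a soft properness/quasi-affineness dichotomy with a downward induction on $|T|$; the engine is simply that a positive-dimensional proper scheme over $\Fpbar$ cannot be quasi-affine. Fix $\tau_0 \notin T$; the existence of such a $\tau_0$ forces $|T| < d$, so $\dim Z_T = d - |T| \ge 1$. Since the $Z_\tau$ are smooth divisors meeting transversally, $Z_T$ is smooth, and (for $T \neq \emptyset$) it is closed in the proper $Z_\tau$ for any $\tau \in T$, hence proper; thus $C$ is a smooth projective irreducible variety of dimension $d - |T| \ge 1$. I would treat $T \neq \emptyset$ first and dispose of $T = \emptyset$ separately.

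First I would establish the engine: $C$ meets $Z_{\tau_1}$ for at least one $\tau_1 \notin T$. Indeed $C \cap W_T = C \setminus \bigcup_{\tau \notin T} Z_\tau$ is closed in $W_T$ (as $C$ is closed in $Z_T$ and $W_T$ is open in $Z_T$), and a closed subscheme of a quasi-affine scheme is quasi-affine, so by Proposition~\ref{prop: quasi-affine} it is quasi-affine. If $C$ met no $Z_\tau$ with $\tau \notin T$, then $C \cap W_T = C$ would be at once proper over $\Fpbar$ and quasi-affine of dimension $\ge 1$; but a proper quasi-affine scheme is finite, since $\Gamma(C,\calO_C)$ is then a finite $\Fpbar$-algebra into whose (zero-dimensional) spectrum $C$ embeds as an open subscheme. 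This contradicts $\dim C \ge 1$, so some $C \cap Z_{\tau_1} \neq \emptyset$.

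Next comes the induction, downward on $|T|$, the base case $|T| = d$ being vacuous ($Z_T$ is then zero-dimensional and there is no $\tau_0 \notin T$). If the $\tau_1$ produced above equals $\tau_0$ we are done. Otherwise, by transversality $C \cap Z_{\tau_1}$ is smooth of pure dimension $d - |T| - 1$ and lies inside $Z_{T \cup \{\tau_1\}}$ with the same dimension, so each irreducible component $C_1$ of $C \cap Z_{\tau_1}$ is an irreducible component of $Z_{T \cup \{\tau_1\}}$. Since $|T \cup \{\tau_1\}| = |T| + 1$ and $\tau_0 \notin T \cup \{\tau_1\}$, the inductive hypothesis gives $C_1 \cap Z_{\tau_0} \neq \emptyset$, and as $C_1 \subseteq C$ this yields $C \cap Z_{\tau_0} \neq \emptyset$.

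Finally, for $T = \emptyset$ the scheme $\Xbar^{\rm PR}$ is not proper and the engine fails; here I would instead invoke the (standard) nonemptiness of the deepest stratum $Z_\Sigma$ on every connected component of $\Xbar^{\rm PR}$, so that $C \cap Z_{\tau_0} \supseteq C \cap Z_\Sigma \neq \emptyset$ directly, using $Z_\Sigma \subseteq Z_{\tau_0}$. The hard part is really the engine: extracting from quasi-affineness of $W_T$ together with properness of $Z_T$ that $C$ cannot avoid \emph{all} the divisors $Z_\tau$ ($\tau \notin T$); once this is in hand the descent needs only the transversality recorded from \cite[Thm 3.10]{RX}. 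The other delicate point is precisely the non-proper case $T = \emptyset$, where the soft argument is unavailable and one must feed in the nonemptiness of the closed stratum on each component. Note that this route uses Proposition~\ref{prop: quasi-affine} as the sole nontrivial input (the ampleness and torsion results entering only through its proof).
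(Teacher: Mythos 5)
Your main argument (the ``engine'' plus the descent) is correct and is in substance the same as the paper's: the paper also derives the statement from Proposition~\ref{prop: quasi-affine} together with properness of $Z_T$ for $T\neq\emptyset$, via the observation that a proper quasi-affine scheme is finite; it merely packages the induction as a maximal-counterexample argument (take $T$ of maximal cardinality admitting a component $C$ with $C\cap Z_{\tau_0}=\emptyset$, deduce $C\cap Z_\tau=\emptyset$ for \emph{all} $\tau\notin T$, hence $C\subset W_T$ is quasi-affine, proper and positive-dimensional). Your explicit use of transversality to see that the components of $C\cap Z_{\tau_1}$ are components of $Z_{T\cup\{\tau_1\}}$ is a point the paper leaves implicit, and is fine.

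The genuine problem is your treatment of $T=\emptyset$. You invoke the ``(standard) nonemptiness of the deepest stratum $Z_\Sigma$ on every connected component of $\Xbar^{\rm PR}$.'' Within this paper that fact is \emph{not} available at this stage: nonemptiness of $Z_\Sigma$ is part (2) of Proposition~\ref{prop:GO}, whose proof is an induction driven precisely by the corollary you are proving, so as written your argument is circular. Nor is it safely ``standard'' externally: in the ramified case the strata are cut out by the generalized partial Hasse invariants $H_{\beta,i}$ on the Pappas--Rapoport model, and the literature cited here only supplies the nonemptiness of the vanishing loci of the \emph{product} invariants $H_\beta=\prod_i H_{\beta,i}$ (\cite[Cor 8.18]{AG}); that is what the paper actually uses. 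The gap is repairable within your own framework with a weaker input: for $T=\emptyset$ you only need the engine's conclusion that the component $C$ meets \emph{some} $Z_{\tau_1}$, and this follows from \cite[Cor 8.18]{AG}, since the vanishing of $H_\beta$ somewhere on $C$ forces the vanishing of some factor $H_{\beta,i}$ there; your descent step then delivers the specific $\tau_0$. But as stated, the $T=\emptyset$ case rests on an unproved (and here logically downstream) assertion.
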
 

\begin{proof} If not, let  $T\subset \Sigma$ be of maximal cardinality where $Z_T$ has a (non-empty) component $C$ such that $C \cap Z_{\tau_0} =\emptyset$ for some $\tau_0\not \in T$ (in particular, $|T|< d$). It follows that $C \cap Z_\tau =\emptyset$ for all $\tau \not\in T$, that is $C \subset W_T$. Since $C$ is quasi-compact, Proposition \ref{prop: quasi-affine} implies that $C$ is quasi-affine. We cannot have  $T=\emptyset$, since otherwise the assumptions would imply that  $\Xbar^{\rm PR}$ (and hence $\Xbar^{\rm DP}$)  has a component composed entirely of ordinary HBAS's, which is false (see, for example, \cite[Cor 8.18]{AG}). In particular, $C$ is both proper and quasi-affine, and hence finite. On the other hand, since $Z_T$ is the intersection of $|T|$ divisors on the equidimensional variety $\Xbar^{\rm PR}$  of dimension $d$, its irreducible components must have dimension at least $d-|T|>0$. This is a contradiction.
\end{proof}

To close this section we summarize some properties of the stratification on $\Xbar^{\rm PR}$. 
 \begin{prop}  \label{prop:GO} The following statements hold.
\begin{enumerate}
\item The collection $\{W_T\}_{T }$ is a stratification of $\Xbar^{\rm PR}$, i.e., for every subset $T\subset \Sigma$, we have $Z_T=\overline{W}_T=\cup_{T' \supseteq T} W_{T'}$.
\item Each stratum $Z_T$ is non-empty and equidimensional of dimension $d-|T|$.
\item Each open stratum $W_T$ is non-empty, quasi-affine, and equidimensional of dimension $d-|T|$.
\end{enumerate}

\end{prop}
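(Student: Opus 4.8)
The plan is to assemble all three assertions from facts already in hand: that the $Z_\tau$ are smooth proper divisors meeting transversally (\cite[Thm 3.10]{RX}), that each open stratum is quasi-affine (Proposition \ref{prop: quasi-affine}), and the intersection property of Corollary \ref{cor: intersect}. Transversality means the $Z_\tau$ form a normal crossings configuration, so the scheme-theoretic intersection $Z_T$ of the $|T|$ divisors $\{Z_\tau\}_{\tau \in T}$ is smooth of pure codimension $|T|$, hence equidimensional of dimension $d-|T|$ wherever it is nonempty; moreover $Z_T$ is closed in the proper scheme $Z_\tau$ for any $\tau \in T$, so it is proper when $T \neq \emptyset$. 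Thus the content of (2) reduces to non-emptiness.

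For non-emptiness I would induct on $|T|$. The base case $Z_\emptyset = \Xbar^{\rm PR}$ is clear. For the inductive step write $T = T' \sqcup \{\tau\}$; by induction $Z_{T'} \neq \emptyset$, so I choose an irreducible component $C$ of $Z_{T'}$ and apply Corollary \ref{cor: intersect} to get $C \cap Z_\tau \neq \emptyset$. Since $C \cap Z_\tau \subseteq Z_{T'} \cap Z_\tau = Z_T$, this yields $Z_T \neq \emptyset$ and completes (2).

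For (3), quasi-affineness is exactly Proposition \ref{prop: quasi-affine}. The remaining point is that $W_T$ is nonempty and equidimensional of the full dimension $d-|T|$, and I would prove the sharper statement that $W_T$ meets every irreducible component of $Z_T$ in a dense open subset. Let $C$ be such a component, of dimension $d-|T|$ by (2). For each $\tau \notin T$ the set $C \cap Z_\tau$ is a \emph{proper} closed subset of $C$: otherwise $C \subseteq Z_\tau$, forcing $C \subseteq Z_T \cap Z_\tau = Z_{T \cup \{\tau\}}$, which is impossible since by (2) the latter has dimension $d-|T|-1 < \dim C$. As $C$ is irreducible, the finite union $\bigcup_{\tau \notin T}(C \cap Z_\tau)$ is then a proper closed subset, so $C \cap W_T = C \setminus \bigcup_{\tau \notin T}(C \cap Z_\tau)$ is a nonempty dense open subset of $C$ of dimension $d-|T|$. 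Hence $W_T$ is nonempty and equidimensional of dimension $d-|T|$, giving (3).

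Finally, for (1), I would identify the strata set-theoretically by assigning to a point $x$ the set $T(x) = \{\tau \in \Sigma : x \in Z_\tau\}$, so that $x \in W_{T'}$ precisely when $T' = T(x)$; this exhibits $Z_T = \bigsqcup_{T' \supseteq T} W_{T'}$. The density statement from (3) gives $\overline{W_T} \supseteq \overline{W_T \cap C} = C$ for every component $C$ of $Z_T$, whence $\overline{W_T} \supseteq Z_T$, while the reverse inclusion is immediate since $Z_T$ is closed; so $\overline{W_T} = Z_T = \bigcup_{T' \supseteq T} W_{T'}$. The genuinely new work is confined to the non-emptiness induction and the density argument in (3); everything else repackages \cite[Thm 3.10]{RX}, Proposition \ref{prop: quasi-affine}, and Corollary \ref{cor: intersect}. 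The step to watch is showing no component of $Z_T$ lies inside some $Z_\tau$ with $\tau \notin T$, as this is precisely what makes $W_T$ dense in $Z_T$ and justifies the term ``stratification''; since it relies on the equidimensionality in (2), the parts must be proved in the order (2), (3), (1).
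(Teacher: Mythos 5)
Your proof is correct and follows the same overall skeleton as the paper's: non-emptiness of $Z_T$ by induction on $|T|$ via Corollary \ref{cor: intersect}, quasi-affineness of $W_T$ from Proposition \ref{prop: quasi-affine}, and non-emptiness/density of $W_T$ in $Z_T$ by comparing dimensions of $Z_T$ and the $Z_{T'}$ with $T' \supsetneq T$. The one genuine divergence is how equidimensionality is obtained. You read it off directly from the transversality assertion of \cite[Thm 3.10]{RX}: a transversal intersection of smooth divisors is smooth of pure codimension equal to the number of divisors, so $Z_T$ is equidimensional of dimension $d-|T|$ wherever non-empty. The paper instead uses only the Krull lower bound $\dim C' \ge d-|T'|$ for a component $C'$ of an intersection of $|T'|$ divisors, and excludes the excess-dimension case $\dim C' = d-|T|$ by intersecting such a $C'$ with the remaining $Z_\tau$ to produce (via Corollary \ref{cor: intersect}) a subscheme of $Z_\Sigma = W_\Sigma$ of dimension at least $1$, contradicting that $Z_\Sigma$ is finite because it is both proper and quasi-affine. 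Your route is shorter and gives smoothness of the closed strata for free; the paper's buys independence from the precise strength of the transversality statement, leaning only on the divisor structure together with Corollary \ref{cor: intersect} and Proposition \ref{prop: quasi-affine}. Both arguments are complete, and the rest of your write-up (the set-theoretic decomposition $Z_T = \bigsqcup_{T' \supseteq T} W_{T'}$ and the density argument giving $\overline{W}_T = Z_T$) matches the paper's in substance.
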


\begin{proof} We remark that the non-emptiness of $Z_T$ follows\footnote{See \cite{DDW} for a different proof of non-emptiness of strata.} from Corollary \ref{cor: intersect}. We prove that each $Z_T$ is equidimensional of dimension $d-|T|$. This is true for $T=\emptyset$.  It is also true for $T=\Sigma$, because $Z_\Sigma=W_\Sigma$ is both projective and quasi-affine (by Proposition \ref{prop: quasi-affine}), and hence finite.  Order the elements of $\Sigma=\{\tau_1,...,\tau_d\}$ such that $T=\{\tau_1,...,\tau_r\}$. For $0 \leq i \leq d$, let $T_i=\{\tau_1,...,\tau_i\}$ (in particular, $T_0=\emptyset$). In the chain 
\[
Z_\Sigma=Z_{T_d} \subset Z_{T_{d-1}} \subset ...\subset Z_{T_r}=Z_T \subset ...\subset Z_{T_1}  \subset Z_{T_0}=\Xbar^{\rm PR}
\]
 every inclusion is  of codimension at most one (by Corollary \ref{cor: intersect}). Since $Z_{T_d}$ and $Z_{T_0}$ are equidimensional of dimensions $0$ and $d$, respectively, it follows that $Z_{T_r}=Z_T$ is equidimensional of codimesion $d-r=d-|T|$. Note that equidimensionality also follows from \cite[Theorem 3.10 ]{RX} (See also \cite[Proposition 4.2.1]{D}).

% Assume the statement is true for $Z_T$, and let $T'=T\cup \{\tau_0\}$ where $\tau_0 \not\in T$. Every  component $C'$ of $Z_{T'}$ is a  component of $C \cap Z_{\tau_0}$ for some component $C$ of $Z_T$. By Corollary \ref{cor: intersect}, $C \cap Z_{\tau_0}\neq \emptyset$, which proves that $Z_{T'} \neq \emptyset$. Furthermore, it follows that for every component $C' \subset Z_{T'}$, we have $\dim(C') \le d - |T|$.  On the other hand, we have $\dim(C') \ge d - |T'|$ (as in the proof of Corollary~\ref{cor: intersect}), so if
%%we have $\dim(C') \geq  \dim(Z_T)-1=d-|T|-1=d-|T'|$. If the equality fails for a component $C'$, then we have $\dim(C')=d-|T|$. 
%equality fails for a component $C'$, then we have $\dim(C')=d-|T|$. 
%Hence $C' \bigcap \cap_{\tau \not \in T'} Z_\tau$, which is %a union of components of $Z_{\Sigma}$, 
%contained in $Z_{\Sigma}$ and non-empty by Corollary~\ref{cor: intersect},
%will have dimension at least $d-|T|-(d-|T'|)=1$.  On the other hand, $Z_{\Sigma}=W_{\Sigma}$ is zero-dimensional as it is both proper and quasi-affine (by Proposition \ref{prop: quasi-affine}). This ends the proof by induction. 
To show that $W_T$ is nonempty for all $T$, note that $W_T=Z_T - \bigcup_{T' \supsetneq T} Z_{T'}$ and that $\dim(Z_{T'}) < \dim(Z_T)$ if $T' \supsetneq T$.  This also shows that $W_T$ is equi-dimensional of dimension $\dim(Z_T)=d-|T|$. 
We now prove that $\{W_T\}_T$ form a stratification of $\Xbar^{\rm PR}$. Since $Z_T$ is closed, we have $\overline{W}_T \subset Z_T$. By dimension considerations, it follows that $\overline{W}_T$ is a union of components of $Z_T$. If $\overline{W}_T \neq Z_T$, then a non-empty union of components of $Z_T$ will have to lie inside $Z_T-W_T=\bigcup_{T' \supsetneq T} Z_{T'}$, which is impossible since every $Z_{T'}$ appearing here has dimension smaller than that of $Z_T-W_T$. The quasi-affineness  of $W_T$ is Proposition \ref{prop: quasi-affine}.

\end{proof}

% \subsection{Filtration of mod $p$ Hilbert modular forms.}

%\subsection{Pappas-Rapoport models in Iwahori level} \label{subsection: IwahoriHMV} {

\section{Pappas-Rapoport models in Iwahori level} \label{subsection: IwahoriHMV}  
Fix $\gerp$ a prime of ${\calO_F}$ above $p$. Recall that $\{\gerJ_1,\cdots,\gerJ_r\}$ is a set of representatives for ${\rm Cl}^+(F)$. For each $1 \leq i \leq r$, there is a unique $1\leq \gamma(i) \leq r$ such that $\gerp\gerJ_i$ and $\gerJ_{\gamma(i)}$ represent the same class in ${\rm Cl}^+(F)$. We fix  once and for all a choice of a totally positive isomorphism $\epsilon_i: \gerJ_{\gamma(i)} \cong \gerp\gerJ_{i}$.

Let $S$ be a locally noetherian $\calO$-scheme  and $\uA/S \equiv (A/S, \iota_A, \lambda_A, \alpha_A, \omega^\bullet_{A/S})$ an $S$-point of $X^{\rm PR}$. Then $A \otimes_{\calO_F} \gerp^{-1}$  can be naturally equipped with a structure of a HBAS $\underline{A \otimes_{\calO_F} \gerp^{-1}}$. We make this into a filtered HBAS by adding a Pappas-Rapoport filtration on $\omega_{(A \otimes_{\calO_F} \gerp^{-1})/S}\cong\omega_{A/S} \otimes_{\calO_F} \gerp$ by applying $ - \otimes_{\calO_F} \gerp$ to the Pappas-Rapoport filtration on $\omega_{A/S}$.

Let $\uA/S = (A/S, \iota_A, \lambda_A, \alpha_A, \omega^\bullet_{A/S})$ and $\uB/S = (B/S, \iota_{B}, \lambda_{B}, \alpha_{B},\omega^\bullet_{B/S})$ correspond, respectively, to $S$-points of $X^{\rm PR}_{\gerJ_i}$ and $X^{\rm PR}_{\gerJ_{\gamma(i)}}$, for some $1 \leq i \leq r$.  An isogeny $f: A \rightarrow B$ is called a cyclic $(\calO_F/\gerp)$-isogeny of filtered  HBAS's if
\begin{enumerate}
 
\item $f$ is an ${\calO_F}$-isogeny of degree $p^{f_{\gerp}}$;

 \item $f$ is compatible with the polarizations, i.e., the  following diagram is commutative
 \[
\xymatrix{ \calP_B \ar[d]_{f^*} \ar[rr]^{\lambda_B} &&\gerJ_{\gamma(i)} \ar[d]^{\bar{\epsilon}_i} \\ \calP_A \ar[rr]^{\lambda_A} && \gerJ_i}
 \]
 where $\bar{\epsilon}_i$ is the composition  $\gerJ_{\gamma(i)} \cong_{\epsilon_i} \gerp\gerJ_i \subset \gerJ_i$, and $f^*(\eta)=f^\vee\eta f$;
 %  $\lambda_B = \lambda_A \circ f^\ast$;
 \item $f$ is compatible with the level structures, i.e., $\alpha_B=f\circ \alpha_A$;
 \item $f$ preserves the Pappas-Rapoport filtrations, i.e., for all $\beta \in \BB$,  the induced map $f^*_\beta: \omega_{B/S,\beta} \rightarrow \omega_{A/S,\beta}$ preserves the filtrations $\omega_{A/S,\beta}^\bullet$ and $\omega_{B/S,\beta}^\bullet$. 
 \end{enumerate}

 Let $Y^{{\rm PR}}_{\gerJ_i}/\calO$ be the Hilbert modular scheme classifying the isomorphism classes of tuples $(\uA/S,\uB/S, f:A \rightarrow B, g: B \rightarrow A \otimes_{\calO_F} {\gerp^{-1}})$, where 

\begin{itemize}
\item $S$ is a locally Noetherian $\calO$-scheme;
\item $\uA/S = (A/S, \iota_A, \lambda_A, \alpha_A, \omega^\bullet_{A/S})$ and $\uB/S = (B/S, \iota_{B}, \lambda_{B}, \alpha_{B},\omega^\bullet_{B/S})$ correspond to $S$-points of $X^{\rm PR}_{\gerJ_i}$ and $X^{\rm PR}_{\gerJ_{\gamma(i)}}$, respectively;
\item $f$ and $g$ are cyclic $(\calO_F/\gerp)$-isogenies of filtered HBAS's;
\item The isogeny $g \circ f: A  \ra A \otimes_{{\calO_F}} \gerp^{-1}$ is the natural map induced by the inclusion of $\calO_F$ in $\gerp^{-1}$. (Equivalently, the isogeny $f \circ (g\otimes_{{\calO_F}} \gerp): B \otimes_{\calO_F} \gerp \ra B$ is the natural map induced by the inclusion of $\gerp$ in ${\calO_F}$.)
 \end{itemize}

The Hilbert modular variety $Y^{\rm PR}_{\gerJ_i}$ is independent of the choice of $\epsilon_i$ up to isomorphism. We define   
\[
Y^{\rm PR}=\bigcup_{i=1}^r\  Y^{{\rm PR}}_{\gerJ_i}.
\]
%For any $S$-point $(f:\uA\rightarrow \uB)$ of $Y^{\rm PR}$, we can define an isogeny 
%\[
%f^t:B\cong A/{\rm Ker}(f) \rightarrow A/A[\gerp] \cong A \otimes_{\calO_F} \gerp^{-1}
%\]
%via the inclusion of ${\rm Ker}(f)$ in $A[\gerp]$. It is easy to see that $f^t \circ f: A  \ra A \otimes_{{\calO_F}} \gerp^{-1}$ is the natural map induced by the inclusion of $\calO_F$ in $\gerp^{-1}$. Similarly, the isogeny $f \circ (f^t \otimes_{{\calO_F}} \gerp): B \otimes_{\calO_F} \gerp \ra B$ is the natural map induced by the inclusion of $\gerp$ in ${\calO_F}$.
% We denote by the universal object on $Y$ by $\delta: \calA_Y \rightarrow \calA'_Y$. Since kernel of $\delta$ is contained in $\calA_Y[\gerp]$, we can construct an isogeny 
%\[
%\delta':\calA'_Y\cong \calA_Y/{\rm Ker}(\delta) \ra \calA_Y/\calA_Y[\gerp]\cong \calA_Y \otimes_{{\calO_F}} \gerp^{-1}.
%\]
%It is easy to see that $\delta' \circ \delta: \calA_Y \ra \calA_Y \otimes_{{\calO_F}} \gerp^{-1}$ is the natural map induced by the inclusion of ${\calO_F}$ in $\gerp^{-1}$. Similarly, $\delta \circ (\delta' \otimes_{{\calO_F}} \gerp): \calA'_Y \otimes_{\calO_F} \gerp \ra \calA'_Y$ is the natural map induced by the inclusion of $\gerp$ in ${\calO_F}$. In particular, for any point $(\delta: \uA \ra \uA')$ on $Y$, there is $\delta': \uA' \ra \uA\otimes_{{\calO_F}} \gerp^{-1}$ satisfying a similar property.

Let $\Ybar^{\rm PR}=Y^{\rm PR} \otimes_{\calO} \FF$ and denote by $\pi_1$ and $\pi_2$ the forgetful  maps
\[
\pi_{1}:\Ybar^{\rm PR} \rightarrow \Xbar^{\rm PR}, \qquad \pi_{2}:\Ybar^{\rm PR} \rightarrow \Xbar^{\rm PR} 
\]
where $\pi_1(\uA,\uB,f,g)=\uA$, and $\pi_2(\uA,\uB,f,g)=\uB$. 

%For any $\beta \in \BB$, let 
%\[
%\rm{pr}_\beta^\ast: \pi_2^\ast {\omega_\beta} \rightarrow \pi_1^\ast{\omega_\beta}
%\]
% denote the map  induced by pulling back differential one forms under the natural projection  $\delta: \calA_Y \rightarrow \calA_Y'$.

We recall a stratification defined on $\Ybar^{\rm PR}$ in  \cite{GK} when $p$ is unramified in $F$, and in \cite{ERX} in the general case. (See also \cite{H} for a similar construction in a related unitary case).  Let  $(\uA,\uB,f,g)$ be the universal tuple defined over $\Ybar^{\rm PR}$. Since for all $\beta \in \BB$, the pullback morphisms $f_\beta^*: \omega_{B/S,\beta} \ra \omega_{A/S,\beta}$ and $g_\beta^*: \omega_{A\otimes_{\calO_F}\gerp^{-1}/S,\beta} \ra \omega_{B/S,\beta}$ preserve
Pappas--Rapoport filtrations, we have induced morphisms
\[
f_{\beta,i}^*  \colon \omega_{B/S,\beta,i} \rightarrow \omega_{A/S,\beta,i},
\]
 \[
g_{\beta,i}^*  \colon \omega_{(A\otimes_{\calO_F}\gerp^{-1})/S,\beta,i} \rightarrow \omega_{B/S,\beta,i},
\]
where for each $\beta \in \BB_\gerp$, $i$ runs between $1$ and $e_\gerp$. For $\tau = \tau_{\beta,i} \in \Sigma_{\gerp}$, let $U_{\tau}$ be the vanishing locus of $f_{\beta,i}^*$ on $\Ybar^{\rm PR}$, and  $V_{\tau}$  the vanishing locus of $g_{\beta,i}^{*}$ on $\Ybar^{\rm PR}$. For $\varphi,\eta$ subsets of $\Sigma_{\gerp}$ satisfying $\sigma^{-1}\varphi \cup \eta =\Sigma_{\gerp},$ we define
\[
Z_{\varphi,\eta}=\bigcap_{\tau \in \varphi} U_{\sigma^{-1}\tau} \cap \bigcap_{\tau \in \eta} V_{\tau}.
\]

%\begin{align}
%\varphi(Q)&=\{\beta \in \BB_{\gerp}: {\delta}_{\sigma^{-1}\circ\beta}^*=0 \},
%\\\nonumber
%\eta(Q)&=\{\beta \in \BB_{\gerp}: {\delta'}_\beta^*=0 \}.
%\end{align}

Notice the shift in $\varphi$ in our notation versus that in \cite{ERX}, which is consistent with the notation in \cite{GK}. One can show that the collection of $Z_{\varphi,\eta}$ defined above form a stratification of $\Ybar^{\rm PR}$. The local and global properties of the strata are studied in detail in \cite{GK} and \cite{ERX}. In the following, we recall some facts about this stratification that we will use in our argument.

Let $r$ be a non-negative integer. A morphism of $\FF$-schemes $\lambda\colon Z_1 \rightarrow Z_2$ is called an $r$-Frobenius factor if there is a morphism  $\lambda^\prime\colon Z_2 \rightarrow Z_1$ such that $\lambda\lambda^\prime\colon Z_2 \rightarrow Z_2$ is the $r$-th power of the absolute Frobenius morphism of $Z_2$. By \cite[Proposition 4.8]{H}, if $Z_1,Z_2$ are schemes of finite type over $\FF$ such that $Z_2$ is normal, then any proper morphism $\lambda\colon Z_1 \rightarrow Z_2$ which is a bijection on points is an $r$-Frobenius factor for some $r \geq 0$.

%For a subset $\varphi \subset \Sigma_{\gerp}$, let $\varphi^c=\Sigma_{\gerp}-\varphi$. It can be shown that each irreducible component of $\Ybar^{\rm PR}$ is an irreducible component of a maximal-dimensional stratum, i.e., a stratum of the form $Z_{\sigma\varphi,\varphi^c}$ for some  $\varphi \subseteq \Sigma_{\gerp}$  (See \cite[GK],\cite{ERX}). Let 
%\[
%{\rm pr}: \PP \rightarrow Z_{\sigma\varphi-\varphi}
%\]
%be the $(\PP_\FF^1)^{|\varphi-\sigma\varphi|}$ bundle given on $Z_{\sigma\varphi-\varphi}$ by  $\PP(\prod_{\tau \in \varphi-\sigma\varphi} \cal{H}_\tau)$.
%
% \begin{prop} We have $\pi_1(Z_{\sigma\varphi,\varphi^c})=Z_{\sigma\varphi-\varphi}$, and $\pi_2(Z_{\sigma\varphi,\varphi^c})=Z_{\varphi-\sigma\varphi}$. Furthermore, there is a  Frobenius factor $\lambda$ which makes the following diagram commutative:
%\begin{equation}\label{Diagram: frob}\xymatrix{Z_{\sigma\varphi,\varphi^c}\ar[dr]_{\pi_1} \ar[r]^{\lambda} &
%\PP\ar[d]^{\rm pr} \\ & Z_{\sigma\varphi-\varphi} }\end{equation}
% \end{prop}

Fix $\tau_0 \in \Sigma_{\gerp}$. Recall the definition of $\HH_{\tau_0}$ in (\ref{eqn: H_tau}). We let
\[
{\rm pr}: \PP \rightarrow Z_{\sigma\tau_0}
\]
be the $\PP^1$ bundle given on $Z_{\sigma\tau_0} \subset X^{\rm PR}$ by  $\PP(\HH_{\tau_0})$. Let $\{\tau_0\}^c=\Sigma_{\gerp}-\{\tau_0\}$.

 \begin{prop} \label{prop: frob factor} If $|\Sigma_{\gerp}| > 1$, then $\pi_1(Z_{\{\sigma\tau_0\},\{\tau_0\}^c})=Z_{\sigma\tau_0}$, and $\pi_2(Z_{\{\sigma\tau_0\},\{\tau_0\}^c})=Z_{\tau_0}$.  Furthermore, there is an integer $r \ge 0$,  and  an $r$-Frobenius factor $\lambda$ which makes the following diagram commutative:
\begin{equation}\label{Diagram: frob}\xymatrix{Z_{\{\sigma\tau_0\},\{\tau_0\}^c}\ar[drr]_{\pi_1} \ar[rr]^{\lambda} &&
\PP\ar[d]^{\rm pr} \\ && Z_{\sigma\tau_0} .}\end{equation}
 \end{prop}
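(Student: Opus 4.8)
The plan is to build $\lambda$ directly out of the universal isogeny so that ${\rm pr}\circ\lambda=\pi_1$ holds by construction, and then to recognise $\lambda$ as an $r$-Frobenius factor by checking the hypotheses of the criterion of \cite{H} recalled above. Write $W:=Z_{\{\sigma\tau_0\},\{\tau_0\}^c}$ and $\tau_0=\tau_{\beta,i}$; unwinding the definition, $W=U_{\tau_0}\cap\bigcap_{\tau\in\{\tau_0\}^c}V_\tau$, so over $W$ the graded map $f^*_{\beta,i}\colon\omega_{B,\beta,i}\to\omega_{A,\beta,i}$ vanishes while $g^*_\tau$ vanishes for every $\tau\in\Sigma_\gerp$ with $\tau\neq\tau_0$. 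The first task is to place the images of $\pi_1$ and $\pi_2$ in the correct Goren--Oort divisors, namely $\pi_1(W)\subseteq Z_{\sigma\tau_0}$ and $\pi_2(W)\subseteq Z_{\tau_0}$. This expresses the compatibility of the Iwahori stratification with the stratification of $\Xbar^{\rm PR}$ under the degeneracy maps: using the factorisation of ${\rm Ver}^*_\beta$ through the graded isogeny maps, together with the defining relation that $g\circ f$ is the canonical isogeny $A\to A\otimes_{\calO_F}\gerp^{-1}$ (which induces multiplication by $\varpi_\gerp$, hence zero, on graded pieces in characteristic $p$), the simultaneous vanishing of all but one of these maps forces the corresponding partial Hasse invariant to vanish on the image, as established in \cite{ERX} and \cite{GK}.

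Next I would construct $\lambda$. Since the universal isogeny $f$ is $\calO_\gerp$-linear and preserves the Pappas--Rapoport filtrations, it induces a map of the rank-two sheaves $f^*_{\beta,i}\colon\HH_{B,\beta,i}\to\HH_{A,\beta,i}$ extending the graded map on $\omega_{B,\beta,i}$. The defining condition of $U_{\tau_0}$ says precisely that the line $\omega_{B,\beta,i}$ lies in $\ker f^*_{\beta,i}$, so the image of $f^*_{\beta,i}$ is a subsheaf of generic rank one of $\HH_{A,\beta,i}=\pi_1^*\HH_{\tau_0}$. Using the local description of the strata at closed points (as in the computation of $\HH_{\beta,i}$), one checks that on $W$ this image is a line subbundle $\calL\subseteq\pi_1^*\HH_{\tau_0}$; equivalently, the universal subgroup $\ker f$ cuts out such an $\calL$ canonically. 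Since $\pi_1(W)\subseteq Z_{\sigma\tau_0}$, this $\calL$ classifies a morphism $\lambda\colon W\to\PP(\HH_{\tau_0})=\PP$, and ${\rm pr}\circ\lambda=\pi_1$ holds tautologically because $\calL$ is a subbundle of $\pi_1^*\HH_{\tau_0}$.

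With $\lambda$ in place, properness is formal. The map $\pi_1\colon\Ybar^{\rm PR}\to\Xbar^{\rm PR}$ is finite, since it forgets the cyclic-isogeny datum (with $g$ determined by $f$), so its restriction $\pi_1|_W\colon W\to Z_{\sigma\tau_0}$ is proper; as ${\rm pr}$ is separated and ${\rm pr}\circ\lambda=\pi_1|_W$ is proper, the cancellation property shows $\lambda$ is proper. Furthermore $\PP$ is a $\PP^1$-bundle over the smooth divisor $Z_{\sigma\tau_0}$ (smooth by \cite{RX}), hence smooth and in particular normal, and $W$ and $\PP$ are of finite type over $\FF$. It therefore remains only to show that $\lambda$ is a bijection on points; granting this, the criterion of \cite{H} exhibits $\lambda$ as an $r$-Frobenius factor for some $r\geq0$. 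The surjectivity assertions then also drop out: once $\lambda$ is onto $\PP$ we obtain $\pi_1(W)={\rm pr}(\lambda(W))=Z_{\sigma\tau_0}$, while $\pi_2(W)=Z_{\tau_0}$ follows from the symmetric construction with the roles of $(A,f)$ and $(B,g)$ interchanged.

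The decisive and hardest step will be the bijectivity of $\lambda$ on points. This needs the explicit local (Dieudonn\'e-module, or local-model) description of the fibres of $\pi_1$ over $Z_{\sigma\tau_0}$ supplied by \cite{ERX} and \cite{GK}: over each such point one must match the choices of compatible cyclic-isogeny datum in $W$ bijectively with the lines in $\HH_{\tau_0}$, and identify that matching with $\lambda$. It is exactly here that the ramified case diverges from the unramified one of \cite{GK}: because $\varpi_\gerp$ acts as zero on the graded pieces $\omega_{\beta,i}$ in characteristic $p$, reconstructing the isogeny from its associated line entails extracting a $p$-power, which is the source of the positive Frobenius degree $r$ and the reason $\lambda$ is only a Frobenius factor rather than an isomorphism of $\PP^1$-bundles.
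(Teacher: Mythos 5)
The paper offers no argument of its own for this proposition: its proof is a bare citation of \cite[Thm 2.6.4(2)]{GK} and \cite[Prop.~4.5]{ERX}, plus the observation that $\pi_2(Z_{\{\sigma\tau_0\},\{\tau_0\}^c})=\pi_1(Z_{\{\sigma\tau_0\}^c,\{\tau_0\}})=Z_{\tau_0}$ by symmetry. Your proposal correctly reconstructs the architecture of the argument that lives inside those references --- define $\lambda$ via the line that $f^*$ cuts out in $\pi_1^*\HH_{\tau_0}$ so that ${\rm pr}\circ\lambda=\pi_1$ holds tautologically, establish properness, and feed bijectivity on points into the criterion of \cite[Prop.~4.8]{H} recalled in the text --- and this is the same route the sources take, so in substance you and the paper agree. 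Be aware, however, that every load-bearing step of your write-up (the containments $\pi_1(W)\subseteq Z_{\sigma\tau_0}$ and $\pi_2(W)\subseteq Z_{\tau_0}$; the fact that the image of $f^*$ on $\HH_{B,\beta,i}$ is a line \emph{subbundle} of $\pi_1^*\HH_{\tau_0}$ rather than merely a coherent subsheaf; and above all the bijectivity of $\lambda$ on geometric points, which you yourself flag as the decisive step) is deferred back to \cite{ERX} and \cite{GK}, i.e.\ to exactly the results the paper cites, so your text is a gloss on the citation rather than an independent proof. One small correction: $\pi_1$ is not finite ``because it forgets the cyclic-isogeny datum'' --- forgetting data gives no quasi-finiteness --- but only properness is needed for your cancellation argument, and that does hold: the isogeny datum is classified by a closed subscheme of a relative flag scheme, so $Y^{\rm PR}\to X^{\rm PR}$ is projective, $W$ is closed in $\Ybar^{\rm PR}$, and $Z_{\sigma\tau_0}$ is proper.
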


 \begin{proof} See \cite[Thm 2.6.4. (2)]{GK} and \cite[Prop. 4.5.]{ERX}, noting the difference in notation between ours and \cite{ERX}: in general, the stratum $Z_{\varphi,\eta}$ is denoted by $Y_{\sigma^{-1}\varphi,\eta} $ in {\it loc. cit}.  For the final statement, note that from our definitions we have $\pi_2(Z_{\{\sigma\tau_0\},\{\tau_0\}^c})=\pi_1(Z_{\{\sigma\tau_0\}^c,\{\tau_0\}})=Z_{\tau_0}$.
 
 \end{proof}

For $\tau=\tau_{\beta,i} \in \Sigma_{\gerp}$, we define $n_\tau=p$ if $i=1$, and $n_\tau=1$ otherwise. By the proposition, there exists $\lambda': \PP \rightarrow Z_{\{\sigma\tau_0\},\{\tau_0\}^c}$ such that $\lambda\lambda^\prime:\PP \rightarrow \PP$ is the $r$-th power of the absolute Frobenius morphism. We will be considering the following composite morphism in the proof of our main theorem:
\begin{equation*}
 \xymatrix{\PP \ar[rr]^{\lambda'} && Z_{\{\sigma\tau_0\},\{\tau_0\}^c}\ar[rr]^{\pi_2} &&Z_{\tau_0}.}
\end{equation*}

\begin{lemma}\label{lemma: fibres} Let $\tau_0\in \Sigma_{\gerp}$ and assume that $f_{\gerp} > 1$.   Let $x$ be a geometric point of $Z_{\sigma\tau_0}$, and denote by $\PP^1_x$ the fibre at $x$ of of the $\PP^1$-bundle $\PP$. Then  $n_{\tau_0}|p^r$ and  for $\tau \in \Sigma_{\gerp}$ we have:
 $$(\pi_2\lambda')^*\omega_\tau|_{\PP^1_x} \cong \begin{cases} 
   \calO_{\PP^1_x}(p^r)& \mbox{if $\tau=\tau_0$,} \\
  \calO_{\PP^1_x}(-p^{r}/n_{\tau_0}) & \mbox{if $\tau=\sigma^{-1}\tau_0$,}\\
 \calO_{\PP^1_x} & \mbox{otherwise.}
  \end{cases}$$
\end{lemma}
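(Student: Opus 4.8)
The plan is to compute the three line-bundle restrictions by tracking how $\omega_\tau$ pulls back along the composite $\pi_2\lambda'$, exploiting the Frobenius-factor structure from Proposition~\ref{prop: frob factor}. First I would use the fact that $\lambda\lambda' = F^r$ (the $r$-th power of absolute Frobenius on $\PP$), which forces $(\lambda')^* \lambda^* \calL \cong \calL^{\otimes p^r}$ for any line bundle $\calL$ on $\PP$, since absolute Frobenius raises line bundles to the $p^r$ power. Combined with $\pi_2 = \pi_2$ factoring through $\lambda'$ via $Z_{\{\sigma\tau_0\},\{\tau_0\}^c}$, this reduces the problem to understanding how the three sheaves $\pi_2^*\omega_{\tau_0}$, $\pi_2^*\omega_{\sigma^{-1}\tau_0}$, and $\pi_2^*\omega_\tau$ (other $\tau$) sit relative to the tautological bundle of the $\PP^1$-fibration $\mathrm{pr}\colon \PP = \PP(\HH_{\tau_0}) \to Z_{\sigma\tau_0}$.

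**The geometry on the $\PP^1$-bundle.** The key geometric input is the definition $\PP = \PP(\HH_{\tau_0})$, so on each fibre $\PP^1_x$ the tautological sub-line-bundle has degree $-1$ and its dual degree $+1$. The crucial identification is that on $Z_{\{\sigma\tau_0\},\{\tau_0\}^c}$, the isogeny $f$ degenerates exactly so that $\lambda$ identifies $\omega_{B,\tau_0}$ (equivalently the relevant graded piece pulled back via $\pi_2$) with the tautological quotient or sub-bundle inside $\HH_{\tau_0}$; this is what makes $(\pi_2\lambda')^*\omega_{\tau_0}$ restrict to $\calO_{\PP^1_x}(+1)$ before applying Frobenius, hence $\calO_{\PP^1_x}(p^r)$ after. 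For $\sigma^{-1}\tau_0$, I would invoke Lemma~5.1 (the Goren--Oort relation): since $\tau_0$ lies in the relevant stratum, $\omega_{\beta,i}$ and its neighbour are inversely related, giving a factor $-p^r/n_{\tau_0}$, where the division by $n_{\tau_0}$ accounts for the $i=1$ case in which the relation involves a $p$-th power (weight $p\,{\bf e}_{\sigma^{-1}\tau}$) so that $n_{\tau_0}=p$ divides $p^r$ appropriately—this is exactly where the divisibility claim $n_{\tau_0}\mid p^r$ must be extracted. For all other $\tau \in \Sigma_\gerp$, the bundle $\omega_\tau$ is pulled back from $Z_{\sigma\tau_0}$ (it does not involve the degenerating direction), so it is constant along the fibres and restricts to $\calO_{\PP^1_x}$.

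**Order of steps.** Concretely I would proceed as follows. Step one: establish that $\lambda^*$ of the relative $\calO_\PP(1)$ restricts on fibres to the expected tautological degree, using $\PP = \PP(\HH_{\tau_0})$. Step two: apply the Frobenius-factor identity $\lambda'^*\lambda^* = (F^r)^*$ to convert fibrewise degrees into their $p^r$-multiples; here the hypothesis $f_\gerp > 1$ is presumably what guarantees the $\PP^1$-bundle is genuinely nontrivial over fibres and that the stratum behaves as in Proposition~\ref{prop: frob factor}. Step three: match $\pi_2^*\omega_{\tau_0}$, $\pi_2^*\omega_{\sigma^{-1}\tau_0}$ to the tautological and anti-tautological directions using Lemma~5.1 applied on the stratum $Z_{\{\sigma\tau_0\},\{\tau_0\}^c}$, and check all remaining $\omega_\tau$ factor through $\mathrm{pr}$. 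Step four: read off the divisibility $n_{\tau_0}\mid p^r$ from the requirement that the $\sigma^{-1}\tau_0$ exponent $-p^r/n_{\tau_0}$ be an integer.

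**Main obstacle.** The hard part will be the precise bookkeeping in Step three, namely pinning down \emph{which} graded piece of $\omega$ the map $\lambda$ sends to the tautological sub-bundle of $\PP(\HH_{\tau_0})$, and correctly propagating the weight relations from Lemma~5.1 through the $i=1$ versus $i>1$ dichotomy encoded in $n_{\tau_0}$. In particular, reconciling the $p$-th-power twist that appears when $i=1$ (so that the Goren--Oort relation reads $\omega_{\beta,1}\cong\omega_{\phi^{-1}\beta,e_\gerp}^{-p}$) with the Frobenius exponent $p^r$, to yield exactly $-p^r/n_{\tau_0}$ rather than $-p^{r+1}$ or $-p^{r-1}$, is the delicate point; this is precisely the source of both the divisibility statement and the shape of the middle case. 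I would verify this by comparing degrees on a single fibre $\PP^1_x$ using the short exact sequences of Lemma~5.1 restricted to the stratum, where the rank-two bundle $\HH_{\tau_0}$ has trivial determinant and hence the two tautological directions have opposite fibrewise degrees.
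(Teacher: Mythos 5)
Your proposal is correct and follows the same overall strategy as the paper: the identity $\lambda\lambda'=F^r$ converts the fibrewise degree of $\lambda^*\calO_{\PP}(1)$ into $p^r$, the identification $\pi_2^*\omega_{\tau_0}\cong\lambda^*\calO_{\PP}(1)$ supplies the first case, and the divisibility $n_{\tau_0}\mid p^r$ is read off from torsion-freeness of $\Pic(\PP^1_x)$. The one genuine divergence is the middle case: the paper quotes a second identity directly from \cite[Prop.~4.7]{ERX}, namely $\pi_2^*\omega_{\sigma^{-1}\tau_0}^{\otimes n_{\tau_0}}\cong\lambda^*\calO_{\PP}(-1)$, whereas you derive the same conclusion from the first case together with the relation $\omega_{\tau_0}|_{Z_{\tau_0}}\cong\omega_{\sigma^{-1}\tau_0}^{-n_{\tau_0}}|_{Z_{\tau_0}}$ from the first lemma of Section 5, pulled back along $\pi_2\lambda'$ (which does land in $Z_{\tau_0}$ by Proposition \ref{prop: frob factor}). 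That route is sound and slightly more self-contained, trading one external input from \cite{ERX} for a lemma already proved in the paper. For the remaining embeddings your phrasing ``$\omega_\tau$ is pulled back from $Z_{\sigma\tau_0}$'' is a little loose: what is true, and what the paper uses, is that $\pi_2^*\omega_\tau^{\otimes n_{\sigma\tau}}\cong\pi_1^*\omega_{\sigma\tau}=\lambda^*\pr^*\omega_{\sigma\tau}$, which becomes trivial on each fibre only after applying $(\lambda')^*$ and again dividing out the exponent in $\ZZ=\Pi$ic of the fibre; this precise isogeny relation is exactly the ``bookkeeping'' you correctly flag as the crux, and in the paper it is entirely outsourced to \cite[Prop.~4.7]{ERX} rather than reproved.
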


\begin{proof} Consider the following diagram

\begin{equation}\label{Diagram: fibres}\xymatrix{\PP\ar[rr]^{\lambda'}\ar[d]_{\pi_2\lambda'}  && Z_{\{\sigma\tau_0\},\{\tau_0\}^c}\ar[drr]_{\pi_1} \ar[rr]^{\lambda}\ar[dll]_{\pi_2} &&
\PP\ar[d]^{\rm pr} \\ Z_{\tau_0}&&&& Z_{\sigma\tau_0} .}\end{equation}
The following statements are consequences of \cite[Prop. 4.7]{ERX} (note that $Z_{\{\sigma\tau_0\},\{\tau_0\}^c}$ is denoted $Y_{\{\tau_0\},\{\tau_0\}^c}$ in {\it loc. cit}).
\begin{enumerate} 
\item $\pi_2^\ast\omega_{\tau_0}=\lambda^\ast\calO_{\PP}(1)$;
\item $\pi_2^\ast \omega^{\otimes n_{\tau_0}}_{\sigma^{-1}\tau_0}=\lambda^\ast\calO_{\PP}(-1)$;
\item $\pi_2^\ast\omega^{\otimes n_{\sigma\tau}}_\tau=\pi_1^\ast\omega_{\sigma\tau}$, if $\tau \not \in \{\tau_0,\sigma^{-1}\tau_0\}$.
\end{enumerate}
 From (1) it follows that $(\pi_2\lambda')^\ast \omega_{\tau_0}=\lambda'^\ast\lambda^\ast\calO_{\PP}(1)=\calO_{\PP}(p^r)$. From (2) it follows that $(\pi_2\lambda')^\ast \omega^{\otimes n_{\tau_0}}_{\sigma^{-1}\tau_0}=\calO_{\PP}(-p^r)$. In particular, this  implies that $n_{\tau_0}|p^r$, and hence we find  $(\pi_2\lambda')^\ast \omega_{\sigma^{-1}\tau_0}|_{\PP^1_x} =\calO_{\PP^1_x}(-p^r/n_{\tau_0})$. Finally, (3) implies that for $\tau \neq \tau_0, \sigma^{-1}\tau_0$ we have 
$(\pi_2\lambda')^\ast\omega^{\otimes n_{\sigma\tau}}_\tau=(\lambda')^\ast\pi_1^\ast\omega_{\sigma\tau}=(\lambda\lambda')^\ast\pr^\ast\omega_{\sigma\tau}$. Since $\pr^\ast\omega_{\sigma\tau}$ restricted to every fibre of $\pr$ is the trivial line bundle, it follows that $(\pi_2\lambda')^\ast\omega^{\otimes n_{\sigma\tau}}_\tau$ restricted to $\PP^1_x$ is trivial, and hence so is the restriction of $(\pi_2\lambda')^\ast\omega_{\sigma\tau}$ to each fibre $\PP^1_x$.

\end{proof}

\section{The Main Theorem}

In this section, we prove our main theorem and discuss some of its consequences. Given what we have proven in the previous sections, the rest of the proof is in essence similar to the case where $p$ is unramified in $\calO_F$, treated in \cite{DK}. We would like to emphasize  again that in this more general proof we bypass the use of deep geometric results on global properties of the Goren-Oort strata on $\Xbar^{\rm PR}$ (currently unavailable in the case $p$ is ramified in $F$), and instead appeal to properties of a stratification on $\Ybar^{\rm PR}$ that are more readily generalizable to other Shimura varieties.  

Recall that for $\tau \in \Sigma$, we have $H_\tau \in M_{{\bf h}_\tau}(\gn;\FF)$, with ${\bf h}_\tau=n_\tau{\bf e}_{\sigma^{-1}\tau}-{\bf e}_\tau$, where $n_\tau=p$ if $i=1$, and $n_\tau=1$ otherwise.

\begin{thm} \label{thm:key}  Let $\gerp |p$ be a prime of $\calO_F$.  Let  ${\bf k}=\! \sum k_\tau{\bf e}_\tau \in \ZZ^{\Sigma}$ be a weight. Suppose that we have 
\[
n_{\tau_0}k_{\tau_0} < k_{\sigma^{-1}{\tau_0},}
\]
for some $\tau_0\in \Sigma_{\gerp} \subset \Sigma$.  Then, multiplication by the
generalized partial Hasse invariant $H_{\tau_0}$ induces an isomorphism:
$$M_{{\bf k}-{\bf h}_{\tau_0}}(\gn;\Fpbar) \,\,\,\stackrel{\sim}{\lra} \,\,\, M_{\bf k}(\gn; \Fpbar).$$
When $\Sigma_{\gerp}= \{\tau_0\}$ (so that the above condition is equivalent to $k_{\tau_0}<0$),  a stronger result holds:  if $k_{\tau_0}<0$, then $M_{\bf k}(\gn; \Fpbar)=0$.
\end{thm}

\begin{proof}   First we assume $|\Sigma_{\gerp}|>1$.  We have a short exact sequence 
\[
0 \rightarrow \omega^{{\bf k}-{\bf h}_{\tau_0}} \rightarrow \omega^{\bf k} \rightarrow \calF \rightarrow 0
\]
of sheaves on $\Xbar^{\rm PR}$, where the first map is  multiplication by $H_{\tau_0}$. The quotient sheaf $\calF$ is supported on  $Z_{\tau_0}$, and we have $H^0(\Xbar^{\rm PR},\calF)=H^0(Z_{\tau_0}, \omega^{\bf k})$. It is hence  enough to show that  $H^0(Z_{\tau_0}, \omega^{\bf k})=0$ under the assumptions. Consider the morphism
\[
\pi_2\lambda'\colon \PP \rightarrow Z_{\tau_0}
\]
(see Proposition \ref{prop: frob factor}).  Since $Z_{\tau_0}$ is reduced and $\pi_2\lambda'$ is surjective, we have an injection \[
(\pi_2\lambda')^*: H^0(Z_{\tau_0},\omega^{\bf k}) \injects H^0(\PP, (\pi_2\lambda')^*\omega^{\bf k}),
\]
It is therefore enough to show that $H^0(\PP, (\pi_2\lambda')^*\omega^{\bf k})=0$. We prove that for any  $s\in H^0(\PP, (\pi_2\lambda')^*\omega^{\bf k})$, the restriction of $s$ to every geometric fibre of  the $\PP^1$-bundle ${\rm pr}: \PP \rightarrow Z_{\sigma\tau_0}$ is zero. Let $x$ be a geometric point of $Z_{\sigma\tau_0}$, and $\PP^1_x$ the fibre over $x$. By Lemma \ref{lemma: fibres}, we have 
\[
(\pi_2\lambda')^*\omega^{\bf k}|_{\PP^1_x}\cong \calO_{\PP^1_x}(p^rk_{\tau_0}-(p^{r}/n_{\tau_0})k_{\sigma^{-1}\tau_0}).
\]
By assumption $p^rk_{\tau_0}-(p^{r}/n_{\tau_0})k_{\sigma^{-1}\tau_0}<0$, which implies that $s|_{\PP^1_x}=0$ as desired.

Now, assume $\Sigma_{\gerp}= \{\tau_0\}$. We prove that if $k_{\tau_0}<0$, then $H^0(\Xbar^{\rm PR}, \omega^{\bf k})=0$.  The proof is essentially the same as in the unramified case given in \cite[Theorem 5.1]{DK}, but we include it here to highlight where Corollary \ref{cor: intersect} is used.
Let $\Sigma=\{\tau_0,\tau_1,...,\tau_{d-1}\}$. Let  $T_i:=\{\tau_{i+1},...,\tau_{d-1}\}$ for $0 \leq i \leq d-1$ (where $T_{d-1}=\emptyset$).  We prove by induction that $H^0(Z_{T_i},\omega^{\bf k})=0$ for all $0 \leq i \leq d-1$. Since $Z_{T_{d-1}}=\Xbar^{\rm PR}$, this proves the claim. 

We first prove the result for $i=0$.  Let $C$ be a component of $Z_{T_{0}}$. By \cite[Thm. 3.10]{RX}, $C$ is a smooth projective curve. By Corollary \ref{cor: intersect}, $H_{\tau_0}\in H^0(\Xbar^{\rm PR},\omega_{\tau_0}^{p-1})$ vanishes at some point on $C$. It follows that  $\omega_{\tau_0}$ has positive degree on $C$. By  Corollary \ref{cor: torsion on closed stratum},  every $\omega_{\tau}$ for $\tau \neq \tau_0$ restricts to a torsion line bundle on $C$ (since $f_{\gerp}$=1, and consequently $\Sigma_\gerq \subset T_0$ for all $\gerq \neq \gerp$). It follows that  every  $\omega_\tau$ for $\tau \neq \tau_0$ has degree zero on $C$. This shows that  $\deg(\omega^{\bf k}|_{C})=k_{\tau_0}\deg(\omega_{\tau_0}|_{C})<0,$ from which it follows  that $H^0(C,\omega^{\bf k})=0$. \footnote{Note that, by \cite[Lecture 11, (VI.)]{M}, for this argument we only need that $C$ is Cohen-Macaulay, which is immediate from Part (2) of Proposition \ref{prop:GO}.}
 
 Now assume that $1 \le i \leq d-1$, and we have $H^0(Z_{T_{i-1}},\omega^{\bf k})=0$ for all ${\bf k}$ with $k_{\tau_0}<0$. We prove that $H^0(Z_{T_i},\omega^{\bf k})=0$ for all such ${\bf k}$ as well. Consider the short exact sequence of sheaves on $Z_{T_i}$
 \[
 0 \ra \omega^{{\bf k}-{\bf h}_{\tau_{i}}}|_{Z_{T_{i}}}\ra \omega^{\bf k}|_{Z_{T_{i}}} \ra \calF_i \ra 0,
 \]
where the first map is multiplication by $H_{\tau_{i}}$. The sheaf $\calF_i$ is supported on the  $Z_{T_{i-1}}$, and we have $H^0(Z_{T_{i}},\calF_i)=H^0(Z_{T_{i-1}}, \omega^{\bf k})=0$. This shows that multiplication by $H_{\tau_{i}}$ induces an isomorphism
  \[
 H^0(Z_{T_{i}}, \omega^{{\bf k}-{\bf h}_{\tau_{i}}}) \,\,\,\stackrel{\sim}{\lra} \,\,\, H^0({Z_{T_{i}}},\omega^{\bf k}).
 \]
The same result holds after replacing ${\bf k}$ with ${\bf k}
^\prime={\bf k}-M {\bf h}_{\tau_{i}}$ for any $M>0$, as $k^\prime_{\tau_0}=k_{\tau_0}<0$. It follows that for all $M>0$, multiplication by  $(H_{\tau_i})^M|_{Z_{T_{i}}}$ induces an isomorphism
 \[
 H^0(Z_{T_{i}}, \omega^{{\bf k}-M{\bf h}_{\tau_i}}) \,\,\,\stackrel{\sim}{\lra} \,\,\, H^0({Z_{T_{i}}},\omega^{\bf k}).
 \]
 By Corollary \ref{cor: intersect}, $H_{\tau_i}$ has a nonempty vanishing locus on every component of $Z_{T_{i}}$. Lemma \ref{lem: maximal power} implies that any nonzero section of $\omega^{\bf k}$ on $Z_{T_{i}}$ is divisible by at most a finite power of $H_{\tau_i}|_{Z_{T_{i}}}$. It follows that $H^0({Z_{T_{i}}},\omega^{\bf k})=0$. This proves the claim by induction. In particular, for $i=d-1$,  we get 
 \[
 H^0(\Xbar^{\rm PR},\omega^{\bf k})=H^0(Z_{T_{d-1}},\omega^{\bf k})=0.
 \]

\end{proof}

We record the following elementary lemma used in the proof of the main theorem, and again in the definition of filtration of Hilbert modular forms in \S \ref{sec:filt}.
\begin{lem}\label{lem: maximal power} Let $X$ be a noetherian scheme. Let $\calL$, $\calM$ be line bundles on $X$. Let $h \in H^0(X,\calL)$ and $f\in H^0(X,\calM)$ be nonzero sections. Assume that the divisor of $h$ intersects every irreducible component of $X$. Then there is a maximal $m\geq 0$ such that $f=h^mg$ for some 
$g \in H^0(X,\calM\otimes \calL^{-m})$.
\end{lem}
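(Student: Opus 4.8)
The plan is to show that the set
$S=\{\,m\ge 0 : f=h^{m}g \text{ for some } g\in H^{0}(X,\calM\otimes\calL^{-m})\,\}$
is a non-empty subset of $\Z_{\ge 0}$ that is bounded above; its maximum then exists automatically. Non-emptiness is immediate, since $m=0$ is admissible with $g=f$. Hence the entire content is the upper bound, i.e.\ the statement that $f$ cannot be divisible by arbitrarily large powers of $h$.

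To bound $S$ I would first reduce to the case where $X$ is integral. Note that if $f=h^{m}g$ on $X$, then restricting to any closed subscheme $C$ gives $f|_{C}=(h|_{C})^{m}(g|_{C})$, so the analogous exponent set for $(C,f|_{C},h|_{C})$ contains $S$; it therefore suffices to bound it for a well-chosen $C$. Since $f\ne 0$ (and, as in the applications, $X$ is reduced, so has no embedded points), there is an irreducible component $C$ of $X$, taken with its reduced structure, on which $f$ does not vanish identically. On such a $C$ we may also assume $h|_{C}\ne 0$: otherwise $f|_{C}=(h|_{C})^{m}(g|_{C})=0$ for every $m\ge 1$, contradicting $f|_{C}\ne 0$, and then $\max S=0$ trivially. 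Thus we are reduced to $X=C$ integral with $f,h$ both non-zero.

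Now by hypothesis the divisor of $h$ meets $C$, so I may choose a point $\xi$ of $X$ lying in it. Trivializing $\calL$ and $\calM$ compatibly near $\xi$ identifies the germs $h_{\xi},f_{\xi}$ with elements of the noetherian local domain $A=\calO_{X,\xi}$, with maximal ideal $\gm$. Because $\xi$ lies on the divisor of $h$ we have $h_{\xi}\in\gm$, while $f_{\xi}\ne 0$: a non-zero section on an integral scheme has non-zero image at the generic point, and $A$ injects into the function field, so no germ of $f$ vanishes. If $m\in S$, then $f_{\xi}=h_{\xi}^{m}g_{\xi}\in\gm^{m}$. By Krull's intersection theorem,
\[
\bigcap_{n\ge 0}\gm^{n}=0
\]
in the noetherian local ring $A$, so the non-zero element $f_{\xi}$ lies outside $\gm^{N+1}$ for some $N$; hence $m\le N$ for every $m\in S$. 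This bounds $S$ and finishes the proof.

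The individual steps are routine, and the lemma is genuinely easy; the one point I would flag as the \emph{main obstacle} is the passage from the global hypothesis $f\ne 0$ to a non-vanishing germ $f_{\xi}\ne 0$ at a point $\xi$ of the divisor of $h$, chosen on a component where $f$ survives. This is precisely where the reduction to an integral component and the assumption that the divisor of $h$ meets \emph{every} component are both needed: the former guarantees that every germ of $f$ is non-zero, and the latter guarantees the existence of a point $\xi\in V(h)$ at which to run the Krull argument. Everything else is formal.
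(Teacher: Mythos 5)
Your proof is correct, and in fact the paper offers no argument at all for this lemma (it is simply declared ``clear''), so there is nothing to compare against; your write-up is a sound formalization of the intended routine argument. The structure is right: $m=0$ shows the exponent set is non-empty, restriction to a component reduces to the integral case, and Krull's intersection theorem in the local ring at a point of $V(h)$ bounds the exponents. One remark worth making explicit: the reducedness caveat you flag in passing is not merely cosmetic --- the lemma as literally stated for an arbitrary noetherian scheme is false. For instance, on $X=\Spec k[x,y]/(y^2,xy)$ with $h=x-1$ and $f=y$, the divisor of $h$ meets the unique irreducible component, yet $y=(x-1)^m\cdot((-1)^m y)$ for every $m$, so no maximal exponent exists; the failure is caused by the embedded point at the origin, exactly the phenomenon your reduction to a component with $f|_C\neq 0$ rules out. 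Since the lemma is only ever applied to the strata $Z_{T}$, which are smooth (hence reduced, with no embedded points), your added hypothesis is satisfied in every use, and your proof is complete in the relevant generality.
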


  \section{Filtration of mod $p$ Hilbert modular forms.}\label{sec:filt}  In this section, we state two corollaries of our main theorem which answer questions Andreatta and Goren asked in \cite{AG} regarding filtrations of Hilbert modular forms. We have previously answered these questions in the case $p$ is unramified in $\calO_F$ in \cite{DK}.

  Recall that in \cite[\S 8.19]{AG},  Andreatta and Goren define the filtration  of a nonzero mod $p$ Hilbert modular form $f$  as the minimal weight of a modular form from which $f$ arises by multiplication by the partial Hasse invariants $H_\beta$ as $\beta$ runs in $\BB$.  As we have seen, over $\Xbar^{\rm PR}$, the partial Hasse invariants $H_\beta$ can be further factorized into a product of generalized partial Hasse invariants. Here we give a  definition of filtration by considering all generalized partial Hasse invariants; see also \cite[\S 1.4]{DDW}.  This notion coincides with that given by Andreatta-Goren in the case  $p$ is unramified in $F$.

Let $0 \neq f \in M_{\bf k}(\gn, \Fpbar)$. For each $\tau \in \Sigma$, we let $a_\tau$ be the maximal power of $H_\tau$ that divides $f$ (which exists by Corollary \ref{cor: intersect} and Lemma \ref{lem: maximal power}). We define the filtration of $f$ to be
\[
\Phi(f)={\bf k} - \sum_\tau a_\tau {\bf h}_\tau.
\]
Since $\Xbar^{\rm PR}$ is smooth, and since for $\tau_1 \neq \tau_2$ the divisors of $H_{\tau_1}$ and $H_{\tau_2}$ have no common components (by part (2) of Proposition \ref{prop:GO}), we find  that $f$ is divisible by $\prod_\tau (H_\tau)^{a_\tau}$. It follows that $\sum_\tau a_\tau {\bf e}_\tau$ is the unique maximal element of the set
$$\left\{\left.\, \sum_\tau x_\tau {\bf e}_\tau \in \ZZ_{\ge 0}^{\Sigma}\, \,\right|\, \,  f = g\prod_\tau (H_\tau)^{x_\tau}\,\mbox{for some $g \in M_{{\bf k} - \sum_\tau x_\tau {\bf h}_\tau}(\gn, \Fpbar)$}\,\right\}$$
under the usual partial ordering.  In particular, $\Phi(f)$ is the weight of $g$ corresponding to the above maximal element. 

We next define the {\em minimal}, {\em standard}, and {\em Hasse} cones.
 $$C^{\rm min}  = \left\{\left.\, \sum_\tau x_\tau {\bf e}_\tau \in \QQ^{\Sigma}\, \,\right|\, \, n_\tau x_\tau \ge x_{\sigma^{-1}\tau}\,\ \mbox{for all $\tau \in \Sigma$}\,\right\},$$
 $$C^{\rm st}  = \left\{\left.\, \sum_\tau x_\tau {\bf e}_\tau \in \QQ^{\Sigma}\, \,\right|\, \,  x_\tau \ge 0\,\ \mbox{for all $\tau \in \Sigma$}\,\right\},$$
$$C^{\rm Hasse}  = \left\{\left.\, \sum_\tau y_\tau {\bf h}_\tau \in \QQ^{\Sigma} \,\,\right|\, \, y_\tau \ge 0 \,\ \mbox{for all $\tau\in \Sigma$}\,\right\}.$$
It is easy to see that $C^{\rm min}\subset C^{\rm st} \subset C^{\rm Hasse}$,  and equality holds if and only if $p$ is totally split in $F$.
 
%It is easily seen that we always have containments  
%$$C^{\min} \subset C^{\rm st} \subset C^{\Hasse},$$ each of which is an equality if and only if $p$ splits completely in $F$.
%
%In \cite[15.8]{AG}, Andreatta and Goren ask the following question: Over the complex numbers, a non-zero Hilbert modular form has weight in $C^{\rm st}$  . In characteristic $p$ this is no longer true as the example of the partial Hasse invariant shows. Note though that the filtration of a partial Hasse invariant is the trivial character  (weight $(0,...,0)$) that lies in $C^{\rm st}$. We therefore ask: is there an example of a modular form whose filtration is not in $C^{\rm st}$? is not in $C^{\Hasse}$?
%
%As a corollary to our main theorem, we can answer that there are no such examples, and in fact the following stronger statement is true:
%
 
\begin{corollary}  \label{corollary: filtration}  If $0 \neq f \in M_{\bf k}(\gn,\Fpbar)$ for some weight 
${\bf k} \in \ZZ^{\Sigma}$, then $\Phi(f) \in C^{\rm min}$.
\end{corollary}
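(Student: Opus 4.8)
The plan is to argue by contradiction, using Theorem~\ref{thm:key} to show that a form whose filtration lies outside $C^{\rm min}$ would have to be divisible by one more generalized partial Hasse invariant than its filtration allows. Recall the construction of $\Phi$ from the previous section: writing $a_\tau$ for the maximal power of $H_\tau$ dividing $f$, we have $f = g \prod_\tau (H_\tau)^{a_\tau}$ for a nonzero form $g \in M_{\Phi(f)}(\gn;\Fpbar)$, and $\sum_\tau a_\tau {\bf e}_\tau$ is the \emph{maximal} element (in the usual partial order) of the set of exponent tuples through which $f$ factors. In particular, by this maximality $g$ is not divisible by $H_\tau$ for any $\tau \in \Sigma$; here one uses that $\Xbar^{\rm PR}$ is smooth and that for $\tau_1 \neq \tau_2$ the divisors of $H_{\tau_1}$ and $H_{\tau_2}$ share no components, by part (2) of Proposition~\ref{prop:GO}.

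First I would set ${\bf k}' := \Phi(f) = \sum_\tau x_\tau {\bf e}_\tau \in \ZZ^{\Sigma}$ and suppose, for contradiction, that ${\bf k}' \notin C^{\rm min}$. By the definition of $C^{\rm min}$, this means there is some $\tau_0 \in \Sigma$ with $n_{\tau_0} x_{\tau_0} < x_{\sigma^{-1}\tau_0}$. Since $\Sigma = \bigsqcup_{\gerp} \Sigma_\gerp$, the embedding $\tau_0$ lies in $\Sigma_\gerp$ for a unique prime $\gerp \mid p$, so the hypothesis of Theorem~\ref{thm:key} is satisfied for the weight ${\bf k}'$ at the embedding $\tau_0$.

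I would then invoke Theorem~\ref{thm:key} to conclude that multiplication by $H_{\tau_0}$ induces an isomorphism $M_{{\bf k}'-{\bf h}_{\tau_0}}(\gn;\Fpbar) \stackrel{\sim}{\lra} M_{{\bf k}'}(\gn;\Fpbar)$; in particular this map is surjective. As $g$ is a nonzero element of the target $M_{{\bf k}'}(\gn;\Fpbar)$, there must be a nonzero $g' \in M_{{\bf k}'-{\bf h}_{\tau_0}}(\gn;\Fpbar)$ with $g = H_{\tau_0} g'$. But this says precisely that $g$ is divisible by $H_{\tau_0}$, contradicting the defining property of $g$ noted in the first paragraph. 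Hence ${\bf k}' = \Phi(f) \in C^{\rm min}$, as claimed.

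Since all of the substantive geometric input is already packaged into Theorem~\ref{thm:key}, I do not expect a genuine obstacle here: the corollary is essentially the contrapositive of the key theorem, read through the definition of the filtration. The only point meriting care is the bookkeeping ensuring that the filtration form $g$ is truly not divisible by any single $H_{\tau_0}$ --- which is exactly the maximality built into $\Phi$, resting in turn on the no-common-components statement of Proposition~\ref{prop:GO}.
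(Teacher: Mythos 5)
Your proposal is correct and follows essentially the same route as the paper's own proof: both argue by contradiction, noting that $\Phi(f)\notin C^{\rm min}$ would put the weight of the filtration form $g$ in the range where Theorem~\ref{thm:key} applies, forcing $g$ to be divisible by some $H_{\tau_0}$ and contradicting the maximality in the definition of $\Phi(f)$. Your extra care about the non-divisibility of $g$ (via smoothness of $\Xbar^{\rm PR}$ and the no-common-components statement) is exactly the bookkeeping the paper relies on implicitly.
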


\begin{proof}  
Let $\Phi(f)=\sum_\tau w_\tau {\bf e}_\tau$. Write $f = g\prod_\tau {(H_\tau)}^{a_\tau}$, where the integers $a_\tau$ are non-negative, and $g\in M(\gn,\Fpbar)$ is not divisible by any generalized partial Hasse invariant.  
 If $\Phi(f) \not\in C^{\rm min}$, then for some $\tau \in \Sigma$ we have $n_\tau w_\tau < w_{\sigma^{-1}\tau}$. It then follows by Theorem \ref{thm:key} that $g$ is divisible by $H_\tau$, which is a contradiction.
\end{proof}
%
%\
%

\begin{corollary} \label{corollary: vanishing} If $M_{\bf k}(\gn,\Fpbar) \neq 0$, then ${\bf k} \in C^{\rm Hasse}$.
\end{corollary}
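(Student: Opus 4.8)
The plan is to derive this as a formal consequence of Corollary \ref{corollary: filtration}, using only the definition of the filtration $\Phi(f)$ and the containment $C^{\rm min} \subset C^{\rm Hasse}$ recorded just after the definitions of the cones.

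First I would take a nonzero $f \in M_{\bf k}(\gn,\Fpbar)$ and recall from the definition of the filtration in Section~6 that $f = g\prod_\tau (H_\tau)^{a_\tau}$ for non-negative integers $a_\tau$, where $g$ is a nonzero form of weight $\Phi(f)$. By the very definition $\Phi(f) = {\bf k} - \sum_\tau a_\tau {\bf h}_\tau$, so rearranging gives the decomposition
$$ {\bf k} = \Phi(f) + \sum_\tau a_\tau {\bf h}_\tau. $$

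Next I would invoke Corollary \ref{corollary: filtration} to conclude $\Phi(f) \in C^{\rm min}$, and then use $C^{\rm min} \subset C^{\rm Hasse}$ to get $\Phi(f) \in C^{\rm Hasse}$. The second summand $\sum_\tau a_\tau {\bf h}_\tau$ lies in $C^{\rm Hasse}$ directly from the definition of that cone, since the $a_\tau$ are non-negative. Finally, because $C^{\rm Hasse}$ is a convex cone and hence closed under addition, the displayed decomposition places ${\bf k}$ in $C^{\rm Hasse}$.

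I do not expect any genuine obstacle here: the entire content has already been absorbed into Corollary \ref{corollary: filtration}, and what remains is the bookkeeping identity ${\bf k} = \Phi(f) + \sum_\tau a_\tau {\bf h}_\tau$ together with the elementary facts that $C^{\rm Hasse}$ is additively closed and contains $C^{\rm min}$. The only point worth stating carefully is that $g$ is taken to be the Hasse-indivisible factor of $f$, so that the exponents $a_\tau$ appearing in $\Phi(f)$ are exactly those realizing the decomposition above; this is precisely what was established in the definition of $\Phi(f)$.
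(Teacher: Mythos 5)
Your argument is correct and is essentially identical to the paper's own proof: both deduce $\Phi(f)\in C^{\rm min}\subset C^{\rm Hasse}$ from Corollary \ref{corollary: filtration} and then use the decomposition ${\bf k}=\Phi(f)+\sum_\tau a_\tau {\bf h}_\tau$ with $a_\tau\geq 0$ together with the additive closure of the cone $C^{\rm Hasse}$. Nothing is missing.
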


\begin{proof} Assume $0 \neq f \in M_{\bf k}(\gn,\Fpbar)$. By Corollary \ref{corollary: filtration}, $\Phi(f)\in C^{\min} \subset C^{\rm Hasse}$. By definition of filtration, there are integers $a_\tau\geq 0$ such that  
\[
{\bf k}=\Phi(f)+\sum_{\tau} a_\tau {\bf h}_\tau \in C^{\rm Hasse}.
\]

\end{proof}

 \end{document}